\newtheorem{thm}{Theorem}[section]
\newtheorem{prop}[thm]{Proposition}
\newtheorem{lem}[thm]{Lemma}
\newtheorem{cor}[thm]{Corollary}
\theoremstyle{definition}
\newtheorem{defn}[thm]{Definition}
\newtheorem{rmk}[thm]{Remark}
\newtheorem{ex}[thm]{Example}
\newcommand{\comments}[1]{}
\newcommand{\Z}{\mathbb{Z}}
\newcommand{\R}{\mathbb{R}}
\newcommand{\Sb}{\mathbb{S}}
\renewcommand{\Im}{\operatorname{\mathrm{Im}}}
\renewcommand{\epsilon}{\varepsilon}
\numberwithin{equation}{section}
\begin{document}

\title[Nonhomogeneous delocalization transition]{A 
localization-delocalization transition for nonhomogeneous random
matrices}

\author{Laura Shou}
\address{School of Mathematics, University of Minnesota, 206 Church St 
SE, Minneapolis, MN 55455, USA}
\email{shou0039@umn.edu}

\author{Ramon van Handel}
\address{Fine Hall 207, Princeton University, Princeton, NJ 08544, USA}
\email{rvan@math.princeton.edu}

\begin{abstract}
We consider $N\times N$ self-adjoint Gaussian random matrices defined by 
an arbitrary deterministic sparsity pattern with $d$ nonzero entries per 
row.  We show that such random matrices exhibit a canonical 
localization-delocalization transition near the edge of the spectrum: when 
$d\gg\log N$ the random matrix possesses a delocalized \emph{approximate} 
top eigenvector, while when $d\ll\log N$ any approximate top eigenvector 
is localized. The key feature of this phenomenon is that 
it is universal with respect to the sparsity pattern, in contrast to
the delocalization properties of \emph{exact} eigenvectors which are
sensitive to the specific sparsity pattern of the random matrix.
\end{abstract}

\subjclass[2010]{60B20.} 

\maketitle

\section{Introduction}\label{sec:mat-intro}

Understanding the eigenvectors of large random matrices, particularly 
whether they are delocalized or localized, is of interest in many areas 
including mathematical physics, computer science, and combinatorics. A 
delocalized vector is one with roughly equal mass spread throughout its 
coordinates, while a localized vector has much of its mass concentrated on 
relatively few coordinates. The guiding example of delocalization arises 
in rotationally invariant ensembles such as the classical Gaussian 
orthogonal ensemble (GOE): their eigenvectors are uniformly distributed on 
the unit sphere, and are therefore always delocalized. Properties of 
uniform random vectors on the sphere can therefore be used as a benchmark 
for measuring delocalization. Much work in this direction has been done 
for general \emph{Wigner-type matrices} 
\cite{rmt-book-dynamical,eigenvectors-survey}, for a variety of 
indicators of delocalization.

On the other hand, the most localized vectors are simply the coordinate 
directions, with all mass concentrated on a single coordinate. These arise 
as eigenvectors of diagonal matrices, such as a diagonal matrix with 
i.i.d.\ diagonal Gaussian entries. To interpolate between the two extremes 
of a diagonal matrix and a Gaussian Wigner matrix, one can consider models 
of varying degrees of sparseness. One such model of interest in 
mathematical physics is random band matrices, which are zero outside of a 
band around the diagonal, and whose eigenvectors are conjectured to 
undergo a phase transition from localization to delocalization depending 
on the band width. See, e.g., \cite{bourgade-survey} for a survey of this topic.

In this paper we will consider Gaussian random matrices with an 
\emph{arbitrary} sparsity pattern, which is only assumed to be $d$-regular 
in the sense that there are $d$ nonzero entries in each row and column. 
This model includes the above mentioned Gaussian models (GOE, diagonal, 
and band matrices), as well as many other matrices that may be highly 
nonhomogeneous. As will be illustrated below using simple examples, the 
delocalization of eigenvectors is sensitive to the choice of sparsity 
pattern; an understanding of such questions for arbitrary sparsity 
patterns is far beyond current technology. In contrast, we will show that 
a much simpler phenomenon arises in this general setting near the edge of 
the spectrum: the \emph{approximate} top eigenvectors (by which we will 
mean unit vectors $v$ with $\|Xv\|_2$ close to $\|X\|$) exhibit a 
canonical localization-delocalization transition at $d\sim\log N$. This 
shows in particular that while the behavior of the \emph{exact} 
eigenvectors may be sensitive to the structure of the model, a weaker 
notion of (de)localization can nonetheless arise universally with respect 
to the sparsity pattern.

\subsection{Sparse Gaussian matrices}
\label{sec:smat}

The following general 
model will be considered throughout this paper. Fix an 
arbitrary $d$-regular undirected graph $G=([N],E)$ with $N$ vertices, 
which may contain self-loops but no multiple edges between the same pair 
of vertices. We now define an $N\times N$ self-adjoint random matrix $X_N$ 
by setting $(X_N)_{xy} = 1_{x\sim y}g_{xy}$, where $g_{xy}$ are 
independent standard (real) Gaussian variables modulo symmetry 
$g_{yx}=g_{xy}$, and $x\sim y$ denotes that $x$ and $y$ are connected by 
an edge in $G$. From now on, we fix a sequence of such matrices $X_N$
indexed by $N$; it is implicit in the notation that $d$ and $G$ 
depend on $N$.

The graph $G$ is used here merely as a convenient way to encode an 
arbitrary sparsity pattern of the entries of $X_N$. For example, the 
complete graph with $N$ vertices yields a Wigner matrix with i.i.d.\ 
standard Gaussian entries (modulo symmetry), while the graph consisting of 
$N$ isolated points with self-loops corresponds to the diagonal matrix 
with i.i.d.\ standard Gaussians on the diagonal. Two intermediate examples 
are periodic band matrices with band width $d$, which are generated by the 
graph on $\Z/N\Z$ with edges between nodes within distance $\frac{d-1}{2}$ 
of each other, and block Wigner matrices with block size $d$, which are 
generated by the disjoint union of $\frac{N}{d}$ complete graphs with $d$ 
vertices (cf.\ Figure~\ref{fig:mat}). Let us emphasize that while these 
simple examples possess many special symmetries that could potentially 
facilitate their analysis, a general $d$-regular graph can be highly 
nonhomogeneous and need not possess any tractable structure.
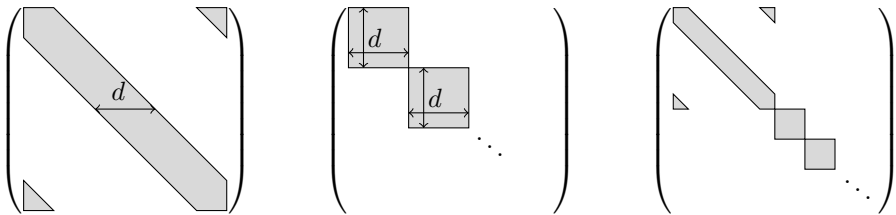
\begin{figure}
\centering
\begin{tikzpicture}
\def\psize{1.5cm}
\def\msize{1.35cm}
\def\w{.4cm} 
\draw[fill=gray!30] (-\msize,\msize-\w)--(-\msize,\msize)--(-\msize+\w,\msize)--(\msize,-\msize+\w)--(\msize,-\msize)--(\msize-\w,-\msize)--cycle;
\draw[xshift=\msize,yshift=\msize,fill=gray!30] (0,0)--(0,-\w)--(-\w,0)--cycle;
\draw[xshift=-\msize,yshift=-\msize,fill=gray!30] (0,0)--(0,\w)--(\w,0)--cycle;
\draw[<->] (-\w,0)--(\w,0);
\node[above] {$d\phantom{d}$};
\node at (-\psize,0) {$\left(\vphantom{\rule{\psize}{\psize}}\right.$};
\node at (\psize,0) {$\left.\vphantom{\rule{\psize}{\psize}}\right)$};
\end{tikzpicture}
\hspace{0.5cm}
\begin{tikzpicture}
\def\psize{1.5cm}
\def\msize{1.35cm}
\def\w{.8cm} 
\foreach \b in {0,1}{
	\draw[fill=gray!30,xshift=-\msize+\b*\w,yshift=\msize-\b*\w] (0,0)--(\w,0)--(\w,-\w)--(0,-\w)--cycle;
	\draw[xshift=-\msize+\b*\w,yshift=\msize-\b*\w-.2cm, <->] (0,-.4)--(\w,-.4);
	\draw[xshift=-\msize+\b*\w,yshift=\msize-\b*\w, <->] (.2,-\w)--(.2,0);
	\node at (-\msize+\b*\w+.35cm,\msize-\b*\w-.4cm) {$d$};
}
\node at (.2,0) [below right] {$\ddots$};
\node at (-\psize,0) {$\left(\vphantom{\rule{\psize}{\psize}}\right.$};
\node at (\psize,0) {$\left.\vphantom{\rule{\psize}{\psize}}\right)$};
\end{tikzpicture}
\hspace{0.5cm}
\begin{tikzpicture}
\def\psize{1.5cm}
\def\msize{1.35cm}
\def\w{.4cm} 
\begin{scope}[scale=0.5,xshift=-\msize,yshift=\msize]
\draw[fill=gray!30] (-\msize,\msize-\w)--(-\msize,\msize)--(-\msize+\w,\msize)--(\msize,-\msize+\w)--(\msize,-\msize)--(\msize-\w,-\msize)--cycle;
\draw[xshift=\msize,yshift=\msize,fill=gray!30] (0,0)--(0,-\w)--(-\w,0)--cycle;
\draw[xshift=-\msize,yshift=-\msize,fill=gray!30] (0,0)--(0,\w)--(\w,0)--cycle;
\end{scope}
\def\w{.8cm} 
\begin{scope}[scale=0.5,xshift=\msize,yshift=-\msize]
\foreach \b in {0,1}{
	\draw[fill=gray!30,xshift=-\msize+\b*\w,yshift=\msize-\b*\w] (0,0)--(\w,0)--(\w,-\w)--(0,-\w)--cycle;
}
\node at (.2,.2) [below right] {$\ddots$};
\end{scope}
\node at (-\psize,0) {$\left(\vphantom{\rule{\psize}{\psize}}\right.$};
\node at (\psize,0) {$\left.\vphantom{\rule{\psize}{\psize}}\right)$};
\end{tikzpicture}
\caption{Band matrix, block matrix, and their direct sum\label{fig:mat}}
\end{figure}

Before we turn to the delocalization phenomenon that will be studied in 
this paper, let us emphasize that delocalization of eigenvectors in the 
the classical sense cannot be universal with respect to the sparsity 
pattern.

\begin{ex}
\label{ex:exactnono}
Consider the three examples illustrated in Figure \ref{fig:mat}: an 
$N\times N$ band matrix or block matrix with $d$ nonzero entries 
per row, or their direct sum.

The situation for band matrices is subject to deep conjectures arising 
from mathematical physics. In particular, it is believed (cf.\ 
\cite{sodin,bourgade-survey} and the references therein) that this model 
should exhibit a localization-delocalization phase transition at $d\sim 
N^{5/6}$ for eigenvectors near the edge of the spectrum: these 
eigenvectors are expected to be delocalized when $d\gg N^{5/6}$, and to be 
localized when $d\ll N^{5/6}$.\footnote{%
	We use the notation $A\ll B$ to mean $A/B\to0$, and 
	$A\gg B$ to mean $A/B\to\infty$, as $N\to\infty$.}
These conjectures remain largely open to date.

On the other hand, it is clear that \emph{every} eigenvector of a random 
block matrix must be supported in one of the blocks. Thus the block matrix 
model has localized eigenvectors whenever $d=o(N)$. In fact, \cite[Theorem 
2.9]{bandsparse} shows that the eigenvectors in this model are nearly 
maximally localized (the latter result states that for any model of the 
kind considered in this paper, the mass of eigenvectors near the edge must 
be spread over at least $\sim \frac{d}{\log N}$ coordinates).

Finally, the set eigenvectors of the direct sum of a block matrix and a 
band matrix of width $d$ is the union of the eigenvectors of a block and 
a band matrix. In this example, it may be the case that half the 
eigenvectors are localized and half are delocalized. While contrived, such 
very simple examples already show that when we admit \emph{arbitrary} 
sparsity patterns, it is not even clear whether
classical eigenvector delocalization questions may 
be meaningfully formulated.
\end{ex}

As general $d$-regular graphs can exhibit arbitrarily complicated 
nonhomogeneities, the above examples suggest that the study of classical 
delocalization questions at this level of generlity is likely to be a 
hopeless task. Instead, the aim of this paper is to exhibit a new 
type of (de)localization phenomenon that is universal with respect to 
sparsity pattern. This phenomenon is necessary of a fundamentally 
different nature than classical eigenvector delocalization questions. In 
order to explain the nature of our main results, let us first recall that 
the macroscopic behavior of the \emph{eigenvalues} is in fact universal 
with respect to sparsity pattern.

\begin{thm}[Eigenvalues]
\label{thm:norm}
The following hold whenever $d,N\to\infty$.
\begin{enumerate}[a.]
\item The empirical eigenvalue distribution $\mu_N$
of $\frac{X_N}{\sqrt{d}}$ converges weakly in probability to the 
standard semicircle distribution:
$$
	\mu_N \Longrightarrow 
	\rho_\mathrm{sc}(x)\,dx := 
	\frac{1}{2\pi} \sqrt{4-x^2} 1_{|x|\le 2}\,dx
	\quad\mbox{in probability.}
$$
\item For every $\varepsilon>0$, we have 
$$
	\max\left\{(2-o(1))\sqrt{{d}},C\sqrt{\log N}\right\}\le 
	\mathbf{E}\|X_N\|\le 
	(2+\varepsilon)\sqrt{d}+K_\varepsilon\sqrt{\log N},
$$
where $C$ is a universal constant and $K_\varepsilon$ depends on 
$\varepsilon$ only.
\end{enumerate}
\end{thm}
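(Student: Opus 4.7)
My plan is to handle (a) by the method of moments plus concentration, and (b) by combining (a) with two classical nonasymptotic estimates.

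\textbf{Part (a).} Let $Y_N := X_N/\sqrt{d}$. I would show that $\mathbf{E}[\tfrac{1}{N}\mathrm{Tr}(Y_N^k)] \to \int x^k\rho_{\mathrm{sc}}(x)\,dx$ for each fixed $k$, and then upgrade to weak convergence in probability by controlling the variance of $\tfrac{1}{N}\mathrm{Tr}(Y_N^k)$ via the Gaussian Poincar\'e inequality. For the expected moment, I expand
\[
\mathbf{E}\,\mathrm{Tr}(X_N^k) = \sum_{x_0,\ldots,x_{k-1}}\mathbf{E}\bigl[(X_N)_{x_0x_1}(X_N)_{x_1x_2}\cdots(X_N)_{x_{k-1}x_0}\bigr]
\]
and apply Wick's formula, so the expectation decomposes over pairings $\pi$ of the $k$ edges of the walk. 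Odd $k$ give zero. For $k=2m$, each non-crossing pairing corresponds to a rooted tree walk with $m$ edges: starting from any of the $N$ vertices, each ``opening'' step has at most $d$ choices for the next vertex while each ``closing'' step is forced by the non-crossing structure, giving at most $Nd^m$ walks per non-crossing pairing and $(1-o(1))Nd^m$ such walks with distinct vertices once $d\to\infty$. Summing over the $C_m$ non-crossing pairings yields $(1+o(1))Nd^mC_m$. Any crossing or non-tree pairing forces an additional edge coincidence that reduces the free choices by at least a factor of $d$, so such pairings contribute $O(Nd^{m-1})$ and are negligible. Dividing by $Nd^m$ recovers the Catalan moments of the semicircle distribution.

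\textbf{Part (b).} For the first lower bound, part (a) implies that for any $\varepsilon>0$ the empirical spectral measure of $Y_N$ charges $[2-\varepsilon,2]$ with positive mass with high probability, so $\|X_N\|\ge(2-\varepsilon)\sqrt{d}$ with high probability; Gaussian concentration of $X\mapsto\|X\|$ (a $1$-Lipschitz function of the independent entries in Frobenius norm) transfers this to the expectation, since it forces $|\mathbf{E}\|X_N\|-\mathrm{Median}(\|X_N\|)|=O(1)$. For the bound $\mathbf{E}\|X_N\|\ge C\sqrt{\log N}$, I would use $\|X_N\|\ge \max_{x\sim y}|(X_N)_{xy}|$: the graph contributes at least $\tfrac{Nd}{2}\ge\tfrac{N}{2}$ independent standard Gaussians among its entries (modulo symmetry), and the expected maximum of their absolute values is of order $\sqrt{\log(Nd)}\ge\sqrt{\log N}$. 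For the upper bound I would invoke the sharp nonasymptotic inequality of Bandeira and van Handel: for any self-adjoint Gaussian matrix with entrywise standard deviations $\sigma_{xy}$,
\[
\mathbf{E}\|X\|\le (2+\varepsilon)\max_x\sqrt{\textstyle\sum_y\sigma_{xy}^2}+K_\varepsilon\max_{x,y}\sigma_{xy}\sqrt{\log N}.
\]
In our setting $\sigma_{xy}=1_{x\sim y}$, so $d$-regularity gives $\max_x\sqrt{\sum_y\sigma_{xy}^2}=\sqrt{d}$ and $\max_{x,y}\sigma_{xy}=1$, which is exactly the claimed bound.

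\textbf{Main obstacle.} The most delicate point is to carry out the moment computation in part (a) \emph{uniformly} over the sparsity pattern: one must verify that every crossing or non-tree pairing really does lose at least one factor of $d$ in every $d$-regular graph, with no further structural assumption on $G$. This reduces to a careful bookkeeping of how the edge-coincidence constraints imposed by $\pi$ reduce the number of free $d$-fold choices during the walk, with $d$-regularity entering only through the crude bound that each vertex has exactly $d$ neighbors.
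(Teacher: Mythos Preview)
Your proposal is correct and tracks the paper's proof at the level of the underlying arguments. The paper's own proof is purely bibliographic: it cites \cite{bandsparse} for part (a), \cite{BvH} for the upper bound and the $\sqrt{\log N}$ lower bound in (b), and notes that the $(2-o(1))\sqrt d$ lower bound follows from (a). Your sketches are faithful reconstructions of exactly those cited results: the moment method with $d$-regular counting is how \cite{bandsparse} proves the semicircle law for arbitrary sparsity patterns, and the Bandeira--van Handel inequality you invoke is precisely \cite[Theorem~1.1]{BvH}. Your max-entry argument for $\mathbf{E}\|X_N\|\gtrsim\sqrt{\log N}$ is slightly more direct than the Sudakov-type bound in \cite[Corollary~3.15]{BvH}, but it gives what is needed here. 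Two minor corrections: the map $(g_{xy})_{x\ge y}\mapsto\|X_N\|$ is $\sqrt{2}$-Lipschitz rather than $1$-Lipschitz (the off-diagonal entries appear twice in the Frobenius norm), and for the first lower bound you do not actually need concentration, since $\mathbf{P}[\|X_N\|\ge(2-\varepsilon)\sqrt d]=1-o(1)$ and nonnegativity already give $\mathbf{E}\|X_N\|\ge(2-\varepsilon-o(1))\sqrt d$.
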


\begin{proof}
Part a.\ and the upper bound of part b.\ are given in \cite[Theorem 
2.3]{bandsparse} and \cite[Theorem 1.1]{BvH}, respectively. The 
$(2-o(1))\sqrt{d}$ lower bound of part b.\ follows directly from part a., 
while the 
$C\sqrt{\log d}$ lower bound is given in \cite[Corollary 3.15]{BvH}.
\end{proof}

Theorem \ref{thm:norm} shows that the extreme eigenvalues of $X_N$ exhibit 
the following phase transition. When $d\gg\log N$, we have
$$
	\mathbf{E}\|X_N\| = (2+o(1))\sqrt{d},
$$
that is, the extreme eigenvalues of $X_N$ converge to the 
edge of the bulk (semicircle) eigenvalue distribution.
This behavior is analogous to that of Wigner matrices. In contrast, when 
$d\ll\log N$, we have
$$
	c\sqrt{\log N}\le\mathbf{E}\|X_N\| \le C\sqrt{\log N},
$$
that is, the extreme eigenvalues are of the same order as 
the size of the largest entry of $X_N$ (as the maximum of $n$ 
independent standard Gaussian variables is of order $\sqrt{\log n}$) and 
are separated from the bulk.

The above interpretation of the eigenvalue phase transition hints at an 
associated (de)localization phenomenon. When $d\gg\log N$, the behavior of 
the extreme eigenvalues is ``Wigner-like'', and one may expect to inherit 
some delocalization properties of Wigner matrices. In contrast, when 
$d\ll\log N$ the magnitude of the extreme eigenvalues is explained by the 
presence of exceptionally large matrix entries, which suggests that the 
extreme eigenvectors should be localized on the coordinates associated to 
these entries. Our main result will show that this phenomenon is captured 
not by (de)localization of the \emph{exact} top eigenvectors, which is 
ruled out by Example \ref{ex:exactnono}, but by that of \emph{approximate} 
top eigenvectors.

\begin{rmk}
Physics heuristics suggest that the transition between localization and 
delocalization of the exact eigenvectors coincides with a transition 
between Poissonian and random matrix statistics of the associated 
eigenvalues \cite{sodin,bourgade-survey}. In contrast, the 
localization-delocalization transition of this paper coincides with the 
transition where outlier eigenvalues detach from the bulk of the spectrum. 
As these outliers appear at the macroscopic scale, this phenomenon is much 
simpler and more robust than the fluctuations of the eigenvalues at the 
local scale.
\end{rmk}

\subsection{Main result}

Localization and delocalization of vectors can be described by various 
non-equivalent notions, such as the $\ell^\infty$ norm or other $\ell^p$ 
norms \cite{ESY,local1}, joint distribution of coordinates \cite{TaoVu1}, 
and no-gaps delocalization \cite{nogaps}; the survey 
\cite{eigenvectors-survey} includes results on several different notions 
of delocalization. Here we will use the notion of 
$(L,\kappa)$-delocalization used in \cite[\S7]{ESY}. A vector delocalized 
in this sense is one that has no ``peaks'' of mass $>\kappa^2$ in any 
subset of $\le L$ coordinates.

\begin{defn}[Delocalization]\label{def:deloc}
A real vector $v\in\Sb^{N-1}$ is \emph{$(L,\kappa)$-delocalized} if for 
every set $A\subseteq[N]$ of size $|A|\le L$, we have 
$\sum_{j\in A}v_j^2\le\kappa^2$. 
The set of $(L,\kappa)$-delocalized vectors will be denoted by
$$
	D_{L,\kappa}:=\bigg\{v\in\Sb^{N-1}:
	\sum_{j\in A}|v_j|^2\le\kappa^2
	\text{ for all }
	A\subseteq[N],|A|\le L
	\bigg\}.
$$
\end{defn}

The $(L,\kappa)$-delocalization condition becomes stricter for smaller 
$\kappa$ and larger $L$. We are primarily interested in the situation 
where $L$ is proportional to $N$. We will colloquially refer to a vector, 
or more precisely a sequence of vectors of increasing dimension 
$N\to\infty$, as \emph{delocalized} if it is $(\nu N,\kappa)$-delocalized 
for some $0<\nu,\kappa<1$ independent of $N$, and otherwise we will refer 
to it as \emph{localized}. In other words, a delocalized vector is one 
that is not concentrated in a vanishing fraction of the coordinates, 
while a localized vector is one that has a constant fraction of the mass 
concentrated in just a vanishing fraction of the coordinates.

Next, recall that any \emph{exact} top eigenvector $v$ (i.e., an 
eigenvector whose eigenvalue has the largest magnitude) of a self-adjoint 
matrix $X$ satisfies 
$$
	\|Xv\|_2 = \sup_{w\in\Sb^{N-1}}\|Xw\|_2 = \|X\|.
$$
Thus we define an \emph{approximate} top eigenvector as follows.

\begin{defn}[Approximate top eigenvector]
$v\in\Sb^{N-1}$ is an \emph{$(1-\varepsilon)$-approximate 
top eigenvector} of a self-adjoint matrix $X$ if
$\|Xv\|_2 \ge (1-\varepsilon)\|X\|$.
\end{defn}

We can now formulate our main result.

\begin{thm}[Localization-delocalization transition]
\label{thm:transition}
Let $N\to\infty$, and fix parameters $\varepsilon,\kappa,\nu$ 
that are independent of $N$. Then the following hold:
\medskip
\begin{enumerate}[(i)]
\item \emph{Localized regime $d\ll\log N$:} for
any $0<\varepsilon<1$ and $0<\kappa<1-\varepsilon$
$$
	\mathbf{P}[
	\text{every }(1-\varepsilon)\text{-approximate top eigenvector 
	of }X_N\text{ 
	is in }\mathbb{S}^{N-1}\backslash D_{o(1)N,\kappa}] =
	1-o(1).
$$
\item \emph{Delocalized regime $d\gg\log N$:} for any $0<\kappa<1$ and 
$0<\nu<\frac{c\kappa^2}{\log\frac{e}{\kappa}}$
$$
	\mathbf{P}[
	\text{there exists a }(1-o(1))\text{-approximate top eigenvector 
	of }X_N\text{ 
	in }D_{\nu N,\kappa}] =
	1-o(1).
$$
\end{enumerate}
\end{thm}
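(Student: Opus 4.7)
The two halves of Theorem \ref{thm:transition} call for very different strategies.

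For part (ii), the delocalized regime $d \gg \log N$, the plan is to exploit the macroscopic bulk of eigenvalues near the spectral edge guaranteed by Theorem \ref{thm:norm}(a). Fixing a small constant $\delta>0$, the semicircle law produces at least $\eta N$ eigenvalues of $X_N$ with $\lambda_i \ge (1-\delta)\|X_N\|$ for some $\eta = \eta(\delta) > 0$. Let $V$ denote the span of the corresponding eigenvectors, so every unit vector $v \in V$ is a $(1-\delta)$-approximate top eigenvector. I would then take $v = Pg/\|Pg\|$ with $P$ the orthogonal projector onto $V$ and $g$ a standard Gaussian in $\R^N$, so $v$ is uniform on the sphere of $V$. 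For any fixed $A \subseteq [N]$ with $|A| \le \nu N$,
\[
\sum_{j \in A} v_j^2 \;=\; \frac{g^\top P I_A P g}{\|Pg\|^2},
\]
with numerator mean $\sum_{j \in A} P_{jj} \le |A| \le \nu N$, and $\|PI_A P\| \le 1$, $\|PI_A P\|_F^2 \le |A|$. Hanson--Wright together with concentration of $\|Pg\|^2$ around $k = \dim V \ge \eta N$ then gives
\[
\P\Bigl[\sum_{j \in A} v_j^2 > \kappa^2\Bigr] \;\le\; \exp(-c\kappa^2 \eta N)
\]
as soon as $\nu$ is small enough relative to $\kappa^2 \eta$. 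Combining with the $\binom{N}{\nu N} \le (e/\nu)^{\nu N}$ union bound over subsets closes provided $\nu \log(e/\nu) < c\kappa^2 \eta$; with $\eta$ a fixed constant this becomes the stated condition $\nu < c\kappa^2/\log(e/\kappa)$. A refinement step (letting $\delta$ tend to $0$ slowly, or a mild power iteration) upgrades $(1-\delta)$ to $(1-o(1))$.

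For part (i), the localized regime $d \ll \log N$, note that $\|X_N\| \gtrsim \sqrt{\log N}$ by Theorem \ref{thm:norm}(b), while for \emph{any} fixed unit vector $v$,
\[
\E\|X_N v\|_2^2 \;=\; \sum_i \sum_{j \sim i} v_j^2 \;=\; d\|v\|_2^2 \;=\; d,
\]
so $\E\|X_N v\|_2 \le \sqrt{d} = o(\sqrt{\log N}) = o(\|X_N\|)$. Thus no fixed $v$ can be an approximate top eigenvector; the goal is to upgrade this to a uniform statement over $v \in D_{L,\kappa}$. For $v \in D_{L,\kappa}$ with $L \ge d$, the $d$-regularity forces $\|v|_{N(i)}\|_2^2 \le \kappa^2$ for every $i$, so the map $(g_{ij}) \mapsto \|X_N v\|_2$ is $\sqrt{2}\kappa$-Lipschitz in the Gaussian entries, and Gaussian concentration yields
\[
\P\bigl[\|X_N v\|_2 \ge \sqrt{d} + t\bigr] \;\le\; \exp\bigl(-t^2/(4\kappa^2)\bigr).
\]
Taking $t = (1-\varepsilon)\|X_N\| - \sqrt{d} \asymp \sqrt{\log N}$ gives a per-vector failure probability $\le N^{-c(1-\varepsilon)^2/\kappa^2}$. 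A chaining or net argument over $D_{L,\kappa} \cap \Sb^{N-1}$ for some $L = o(N)$ of slowly vanishing rate then bounds $\sup_{v \in D_{L,\kappa}} \|X_N v\|_2 < (1-\varepsilon)\|X_N\|$ with high probability, ruling out the existence of any delocalized approximate top eigenvector.

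The main obstacle in part (i) is controlling the covering number of $D_{L, \kappa}$: this set sits inside the $\ell^\infty$ ball of radius $\kappa$ and has essentially full metric entropy, so a na\"ive volumetric bound fails. One must decompose each $v \in D_{L,\kappa}$ into its heaviest $L$ coordinates (living in an $L$-dimensional coordinate subspace, hence covered by $\sim (3/\delta)^L$ points per choice of support) and a flat residue with $\|\,\cdot\,\|_\infty \le \kappa/\sqrt{L}$, and apply Maurey-type entropy estimates on the flat part. In part (ii), the main subtlety is the tension between $\dim V$ (which must stay $\Omega(N)$ for the union bound to close) and the approximation quality $(1-o(1))$ (which would on its face require $\dim V$ to shrink); bridging these via the refinement step is the delicate point.
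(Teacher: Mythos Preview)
Both parts contain genuine gaps.

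\textbf{Part (i).} Your per-vector concentration gives a tail of order $N^{-c/\kappa^2}$, but any net for $D_{L,\kappa}$ (or for its heavy and flat pieces separately) has cardinality exponential in $N$, so no union bound or chaining with a merely polynomial tail can close. The paper uses the same heavy/flat split $v=v^++v^-$, but handles the heavy part \emph{deterministically}: since $\|v^+\|_2\le\kappa$ for $v\in D_{L,\kappa}$, one has $\|X_Nv^+\|_2\le\kappa\|X_N\|$, and this term is absorbed into the left side of $(1-\varepsilon)\|X_N\|\le\|X_Nv\|_2$ precisely because $\kappa<1-\varepsilon$. Only the flat part $v^-$, with $\|v^-\|_\infty\le L^{-1/2}$, needs a probabilistic bound; the supremum of $w\mapsto\|X_Nw\|_2$ over the $\ell^\infty$-ball is attained at the $2^N$ vertices $\{-1,+1\}^N$, and for each vertex $w$ one has $\P[\|X_Nw\|_2\ge\sqrt{dN}+t]\le 2e^{-t^2/4d}$, whose exponent is of order $N$ when $t\sim\sqrt{dN}$, so the union bound over $2^N$ points closes. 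The outcome is that on the event in question $(1-\varepsilon-\kappa)\|X_N\|\le C\sqrt{d/\nu}$, which contradicts $\|X_N\|\ge c\sqrt{\log N}$ once $\nu\gg d/\log N$.

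\textbf{Part (ii).} Your Hanson--Wright argument, using only the trivial bound $\sum_{j\in A}P_{jj}\le|A|$, does produce a $(1-\delta)$-approximate delocalized top eigenvector for each fixed $\delta$, but the constant in the admissible $\nu$ range scales like $\eta(\delta)\sim\delta^{3/2}$ and vanishes as $\delta\to 0$; and power iteration drives the vector toward the possibly localized exact top eigenvector. So neither refinement you propose can reach $(1-o(1))$ with a universal $c$. The paper supplies the missing ingredient: via a resolvent approximation of the spectral projector together with a mesoscopic semicircle law, it shows that for the eigenspace $E_N$ with eigenvalues in $[2-\varepsilon_N,3]$ (where $\varepsilon_N\to 0$ slowly) the diagonal satisfies $\max_x(P_{E_N})_{xx}\le C\,m_N/N$ for a universal $C$, with $m_N=\dim E_N=o(N)$. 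Under this diagonal control, an $\ell^q$-moment computation for a uniform random vector in $E_N\cap\Sb^{N-1}$ (Proposition~\ref{prop:delocspace}, not a union bound over subsets) yields a vector in $D_{\nu N,\kappa}$ for $\nu<c\kappa^2/\log(e/\kappa)$ with $c$ depending only on $C$ and \emph{independent of $m_N$}. Your union-bound route cannot match this even with the diagonal bound in hand, because the Hanson--Wright tail exponent is at best of order $m_N$, not $N$, while the number of subsets is $e^{cN}$.
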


Qualitatively, Theorem \ref{thm:transition} yields a strong dichotomy 
between the localized and delocalized regimes. In the delocalized regime, 
there exists a $(1-o(1))$-approximate, that is, a ``nearly exact,'' top 
eigenvector that is delocalized. On the other hand, in the localized 
regime, every approximate top eigenvector that is even within a constant 
fraction of the edge of the spectrum must be localized.

A more careful interpretation of the quantitative aspect of Theorem 
\ref{thm:transition} further strenghtens this dichotomy. Recall that the 
benchmark example of a delocalized vector is a uniform random vector in 
$\mathbb{S}^{N-1}$; such random vectors arise as eigenvectors of GOE 
matrices, and therefore possess the strongest form of delocalization one 
could hope for. In Appendix~\ref{sec:unitsphere}, we will show that a 
uniform random vector in $\mathbb{S}^{N-1}$ is $(\nu 
N,\kappa)$-delocalized with high probability if and only if $\nu \lesssim 
\frac{\kappa^2}{\log\frac{e}{\kappa}}$. Thus the delocalized regime of 
Theorem \ref{thm:transition} yields an $(1-o(1))$-approximate top 
eigenvector that exhibits nearly the same degree of delocalization as a 
uniform random vector in the sphere, up to the value of the universal 
constant $c$.

It should be emphasized that Theorem \ref{thm:transition} sheds little 
light on classical questions in random matrix theory surrounding 
\emph{exact} eigenvectors: as is illustrated by Example 
\ref{ex:exactnono}, such questions are not even meaningful at the level of 
generality of this paper. While part (i) of Theorem \ref{thm:transition} 
yields localization of exact as well as approximate eigenvectors, it 
captures only a small subset of the regime in which some classical random 
matrix models (such as band matrices) exhibit eigenvector localization. On 
the other hand, part (ii) does not provide any information at all on 
individual eigenvectors, while Example \ref{ex:exactnono} illustrates that 
delocalization of approximate eigenvectors can arise even when all the 
exact eigenvectors are localized. The aim of Theorem \ref{thm:transition} 
is not to address such classical random matrix questions, but rather to 
exhibit a fundamentally different delocalization phenomenon that captures 
a nontrivial feature of a much larger class of random matrix models.

\subsection{Outline} 

This paper is organized as follows. In section~\ref{sec:loc}, 
we prove the localized regime of Theorem~\ref{thm:transition}. The basis 
of the proof is to show, using subgaussian estimates, that the existence 
of a delocalized approximate top eigenvector implies 
$\|X_N\|=O(\sqrt{d})$; consequently, no delocalized approximate top 
eigenvector can exist in the regime where $\|X_N\|\sim \sqrt{\log 
N}\gg\sqrt{d}$.

The remainder of the paper is devoted to the delocalized regime. The basic 
idea of the proof is to construct an approximate top eigenvector by taking 
a random superposition of many exact eigenvectors near the edge of the 
spectrum. In order to show such a vector is delocalized, we must establish 
that even though each exact eigenvector may be localized, the subspace 
spanned by sufficiently many of these eigenvectors is delocalized in an 
appropriate sense (in particular, this rules out that the exact 
eigenvectors are all simultaneously localized in a small subset of the 
coordinates). In section~\ref{sec:subspace}, we first explain what 
property of a linear subspace is needed to ensure that it contains 
delocalized vectors. We then show in section~\ref{sec:deloc} that this 
property is indeed satisfied for an appropriate eigenspace of $X_N$; this 
is accomplished by approximating the projection matrix of the eigenspace 
in terms of the resolvent, whose behavior is goverened by a mesoscopic 
form of the semicircle law. Combining all the above ingredients concludes 
the proof of Theorem \ref{thm:transition}.

Finally, Appendix~\ref{sec:unitsphere} shows that the delocalization 
provided by Theorem~\ref{thm:transition}(ii) agrees quantitatively with 
that of a uniform random vector in $\mathbb{S}^{N-1}$. This result is 
included to clarify the meaning of our main result, and is not used 
elsewhere.

\section{Proof of localization for \texorpdfstring{$d\ll\log N$}{d<<log N}}
\label{sec:loc}

The proof of localization Theorem~\ref{thm:transition}(i) is based on 
Gaussian concentration. Let us recall the general principle
for future reference, cf.\ \cite[\S5.4]{conc-ineq}.

\begin{thm}[Gaussian concentration]\label{thm:gaussian-concentration}
Let $Z$ be a standard Gaussian vector in $\mathbb{R}^n$, and let
$f:\R^n\to\R$ be $L$-Lipschitz with respect to the Euclidean norm. Then
$$
	\mathbf{P}\left[\left|f(Z)-\mathbb{E}[f(Z)]\right|\ge t\right]\le 
	2e^{-t^2/2L^2}
	\quad\text{ for all }t\ge0.
$$
\end{thm}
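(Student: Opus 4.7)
The plan is to prove this classical Gaussian concentration inequality via Herbst's argument, which converts a functional inequality (the Gaussian logarithmic Sobolev inequality of Gross) into a tail estimate. First I would reduce to the case where $f$ is smooth and bounded: convolution with a narrow Gaussian kernel preserves the Lipschitz constant $L$, and truncation does as well; the general Lipschitz case is then recovered at the end by Fatou's lemma, since any $L$-Lipschitz $f$ is a pointwise limit of smooth bounded approximants with the same Lipschitz constant.

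The key step is to apply the Gaussian log-Sobolev inequality
$$
\mathrm{Ent}_\gamma(g^2) \le 2\int |\nabla g|^2\,d\gamma,
$$
where $\gamma$ is the standard Gaussian measure on $\mathbb{R}^n$, with $g^2 = e^{\lambda f}$. The pointwise bound $|\nabla f|\le L$ then yields a differential inequality for $H(\lambda) := \mathbb{E}[e^{\lambda(f-\mathbb{E}[f])}]$ of the form
$$
\frac{d}{d\lambda}\!\left(\frac{\log H(\lambda)}{\lambda}\right) \le \frac{L^2}{2}.
$$
Integrating from $\lambda = 0$, where $(\log H(\lambda))/\lambda \to 0$ by Taylor expansion of $H$, gives the subgaussian moment generating function bound $\log H(\lambda)\le \lambda^2 L^2/2$ for all $\lambda \ge 0$. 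A Chernoff-Markov argument, optimized at $\lambda = t/L^2$, then produces $\mathbf{P}[f-\mathbb{E}[f]\ge t]\le e^{-t^2/(2L^2)}$; applying the same chain of inequalities to the $L$-Lipschitz function $-f$ yields the matching lower tail, and the factor $2$ in the statement absorbs the union bound of the two one-sided events.

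The main obstacle is the Gaussian log-Sobolev inequality itself: this is the genuine content, and Herbst's argument then produces concentration essentially mechanically. The standard route is to tensorize entropy over the product structure of $\gamma$ to reduce to dimension one, and prove the one-dimensional case either by an explicit interpolation argument or by a semigroup computation with the Ornstein--Uhlenbeck operator and its carré du champ. Alternatively, one could bypass log-Sobolev altogether and use the Borell--Sudakov--Tsirelson Gaussian isoperimetric inequality, which compares the measure of $t$-enlargements of the sublevel set $\{f\le \mathrm{med}(f)\}$ to that of half-spaces and yields the same subgaussian tail (with the median in place of the mean, the two differing by $O(L)$ that gets absorbed into constants). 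Either route is a substantial classical theorem, so beyond citing one of them the proof of the stated concentration inequality is largely bookkeeping.
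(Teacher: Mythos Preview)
Your proposal is correct: Herbst's argument combined with the Gaussian log-Sobolev inequality is the standard route to this concentration bound, and the steps you outline (smoothing reduction, differential inequality for $\lambda^{-1}\log H(\lambda)$, Chernoff optimization, symmetrization via $-f$) are all sound. The paper, however, does not prove this statement at all; it is stated as a classical result with a citation to \cite[\S5.4]{conc-ineq} (Boucheron--Lugosi--Massart), where precisely the Herbst-plus-log-Sobolev argument you sketch is carried out. So your proof is not a different route from the paper's own proof but rather a fleshing-out of the cited reference.
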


We need the following corollary.

\begin{cor}
\label{cor:eucconc}
For every $w\in\mathbb{R}^N$ such that $\|w\|_\infty\le 1$, we have
$$
	\mathbf{P}[
	\|X_N w\|_2 \ge \sqrt{dN}+t] 
	\le 
	2e^{-t^2/4d}.
$$
\end{cor}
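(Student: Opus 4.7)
The plan is to apply Gaussian concentration (Theorem~\ref{thm:gaussian-concentration}) to the function $f(g) := \|X_N w\|_2$, viewed as a function of the vector $g = (g_{xy})_{x \le y,\, x \sim y}$ of independent Gaussian coordinates. Two ingredients suffice: an upper bound $\mathbf{E}[f] \le \sqrt{dN}$, and a bound on the Lipschitz constant $L$ of $f$ with respect to the Euclidean norm on $g$. Given both, the conclusion follows immediately, since Theorem~\ref{thm:gaussian-concentration} then gives
$$
\mathbf{P}\bigl[\|X_N w\|_2 \ge \sqrt{dN}+t\bigr] \le \mathbf{P}\bigl[f(g) \ge \mathbf{E}[f(g)] + t\bigr] \le 2 e^{-t^2/(2L^2)}.
$$

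For the expectation, I would compute the second moment directly. For each $x$, $(X_N w)_x = \sum_y 1_{x \sim y}\, g_{xy}\, w_y$ is a centered Gaussian of variance $\sum_y 1_{x \sim y}\, w_y^2$. Summing over $x$ and using $d$-regularity (each column has exactly $d$ nonzero entries) yields $\mathbf{E}\|X_N w\|_2^2 = d\|w\|_2^2 \le dN$, and Jensen gives $\mathbf{E}\|X_N w\|_2 \le \sqrt{dN}$.

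For the Lipschitz constant, observe that $g \mapsto X_N(g) w$ is linear, so $f$ is the composition of a linear map with the Euclidean norm, and its Lipschitz constant equals the operator norm of this linear map. I would estimate the latter by Cauchy--Schwarz applied row by row: for any perturbation $h$,
$$
\bigl(X_N(h) w\bigr)_x^2 = \Big( \sum_y 1_{x \sim y}\, h_{xy} w_y \Big)^{\!2} \le \Big(\sum_y 1_{x \sim y}\, w_y^2 \Big)\Big(\sum_y 1_{x \sim y}\, h_{xy}^2 \Big) \le d\sum_y 1_{x \sim y}\, h_{xy}^2,
$$
using $\|w\|_\infty \le 1$ and $d$-regularity in the first factor. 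Summing over $x$ and using $h_{xy} = h_{yx}$ (so each independent coordinate appears at most twice in $\sum_{x,y:\, x\sim y} h_{xy}^2$) gives $\|X_N(h) w\|_2^2 \le 2d \|h\|_2^2$, i.e., $L^2 \le 2d$. Substituting this into the display of the first paragraph yields exactly $2e^{-t^2/4d}$.

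The only delicate point is the Lipschitz estimate: one must use sparsity when bounding $\|X_N(h) w\|_2$ in terms of $\|h\|_2$. For instance, the naive bound $\|X_N(h) w\|_2 \le \|X_N(h)\|_F \|w\|_2$ would give $L^2 = O(N)$ (since $\|w\|_2^2$ can be as large as $N$) and thus destroy the $d$-scaling on which the corollary relies; the row-by-row Cauchy--Schwarz above is essential to recover the correct exponent.
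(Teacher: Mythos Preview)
Your proof is correct and essentially identical to the paper's: both bound $\mathbf{E}\|X_Nw\|_2$ via the second moment and $d$-regularity, and both obtain the $\sqrt{2d}$ Lipschitz constant by applying Cauchy--Schwarz row by row (the paper phrases this as $|\langle z,w\rangle|\le\sqrt{d}\,\|z\|_2$ for $z$ with $d$ nonzero entries, which is your display with the roles of $h$ and $w$ playing $z$ and the bounding vector). The final remark about the naive Frobenius bound is a nice addition not present in the paper.
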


\begin{proof}
By Cauchy-Schwarz, $|\langle z,w\rangle|\le\sqrt{d}\|z\|_2$ for 
any $z$ with $d$ nonzero entries. 
Thus any self-adjoint matrices $X,Y$ with $d$ nonzero 
entries in each row satisfy
$$
	|\|Xw\|_2-\|Yw\|_2| \le
	\|(X-Y)w\|_2 \le
	\sqrt{2d} \,
	\Bigg(\sum_{i\ge j} (X_{ij}-Y_{ij})^2\Bigg)^{1/2}.
$$
We can therefore view $\|X_Nw\|_2$ as a $\sqrt{2d}$-Lipschitz function
of the i.i.d.\ standard Gaussian variables $(g_{xy})_{x\ge y,x\sim y}$
that appear in its definition. 
The conclusion follows from
Theorem \ref{thm:gaussian-concentration} and $\mathbf{E}\|X_N w\|_2\le 
(\mathbf{E}\|X_Nw\|_2^2)^{1/2}=\sqrt{d}\|w\|_2\le\sqrt{dN}$.
\end{proof}

The key idea behind the localized regime is the following.

\begin{lem}
\label{lem:locest}
Let $0<\varepsilon<1$, $0<\kappa<1-\varepsilon$, $\frac{1}{N}\le \nu<1$.
Define the event
$$
	\Omega_{\nu,\kappa,\varepsilon,N} :=
	\{\text{there exists a }(1-\varepsilon)\text{-approximate
	top eigenvector of }X_N\text{ in }D_{\nu N,\kappa}\}.
$$
Then
$$
	\mathbf{P}[\Omega_{\nu,\kappa,\varepsilon,N}\cap\{\|X_N\| > 
	\tfrac{8}{1-\varepsilon-\kappa}(\tfrac{d}{\nu})^{1/2}
	\}] \le
	2 e^{-3N}.
$$
\end{lem}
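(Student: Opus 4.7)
The plan is to show that if a $(1-\varepsilon)$-approximate top eigenvector $v\in D_{\nu N,\kappa}$ exists, then $\|X_N\|$ is automatically of order $\sqrt{d/\nu}$, directly contradicting the assumption that $\|X_N\|$ exceeds the threshold $\frac{8}{1-\varepsilon-\kappa}\sqrt{d/\nu}$. The core mechanism is to extract from $v$ a piece whose $\ell^\infty$ norm is controlled, so that Corollary \ref{cor:eucconc} can be applied uniformly via a net argument.

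Concretely, I would split $v = v_A + v_{A^c}$, where $A \subseteq [N]$ collects the $\lfloor \nu N\rfloor$ coordinates on which $|v|$ is largest. The delocalization assumption yields $\|v_A\|_2 \le \kappa$, hence $\|X_N v_A\|_2 \le \kappa\|X_N\|$. Since every off-peak entry is no larger than the smallest on-peak entry, one also obtains $\|v_{A^c}\|_\infty \le \kappa/\sqrt{\lfloor \nu N \rfloor} \le \kappa\sqrt{2/(\nu N)}$. Combined with the approximate-eigenvector hypothesis $\|X_N v\|_2\ge(1-\varepsilon)\|X_N\|$ and the triangle inequality,
$$
  (1-\varepsilon-\kappa)\|X_N\| \le \|X_N v_{A^c}\|_2 \le \|v_{A^c}\|_\infty \,\|X_N\|_{\infty\to 2},
$$
where $\|X_N\|_{\infty\to 2}:=\sup_{\|w\|_\infty\le 1}\|X_Nw\|_2$.

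The remaining task is to bound $\|X_N\|_{\infty\to 2}$ with probability at least $1-2e^{-3N}$. Since $w\mapsto \|X_Nw\|_2$ is convex, the supremum over the $\ell^\infty$ ball is attained at a vertex $\{-1,+1\}^N$, reducing the problem to a maximum over $2^N$ \emph{fixed} unit-$\ell^\infty$ vectors. Applying Corollary \ref{cor:eucconc} at each such vertex with $t=4\sqrt{dN}$ and taking a union bound gives
$$
   \mathbf{P}\!\left[\|X_N\|_{\infty\to 2}>5\sqrt{dN}\right] \le 2\cdot 2^N e^{-4N} \le 2e^{-3N}.
$$
On the complementary event, substituting back yields $(1-\varepsilon-\kappa)\|X_N\|\le 5\sqrt{2}\,\kappa\sqrt{d/\nu}<8\sqrt{d/\nu}$, which is exactly what is needed.

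The main obstacle, and the reason the decomposition is essential, is that $v$ is random and depends on $X_N$, so Gaussian concentration cannot be applied directly to $\|X_N v\|_2$. The peak/flat split converts the problem into bounding $\sup_{\|w\|_\infty\le 1}\|X_N w\|_2$ over a \emph{fixed} set, whose exponential metric entropy (the $2^N$ vertices) is just barely dominated by the Gaussian tail of Corollary \ref{cor:eucconc} at the relevant scale $t\sim\sqrt{dN}$. This matching of scales is what ultimately fixes the threshold at $\sqrt{d/\nu}$.
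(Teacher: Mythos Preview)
Your argument is correct and follows essentially the same route as the paper: split a delocalized approximate top eigenvector into a ``peak'' piece with small $\ell^2$-norm and a ``flat'' piece with small $\ell^\infty$-norm, then control $\sup_{\|w\|_\infty\le 1}\|X_Nw\|_2$ by reducing to the $2^N$ vertices of the cube and applying Corollary~\ref{cor:eucconc} with a union bound. The only cosmetic difference is the decomposition itself: the paper splits $v$ at the fixed threshold $|v_i|\lessgtr L^{-1/2}$ (yielding $\|v^-\|_\infty\le L^{-1/2}$ directly), whereas you take $A$ to be the top $L$ coordinates and deduce $\|v_{A^c}\|_\infty\le \kappa L^{-1/2}$ from $\|v_A\|_2\le\kappa$; both give what is needed and the remaining arithmetic is the same.
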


\begin{proof}
Let $L=\lfloor \nu N\rfloor$, and define for any $v\in\mathbb{S}^{N-1}$
the vectors $v^+,v^-\in\mathbb{R}^N$ as
$$
	v^+_i := v_i\mathbbm{1}_{|v_i|>L^{-1/2}},\qquad
	v^-_i := v_i\mathbbm{1}_{|v_i|\le L^{-1/2}}.
$$
As $|\{i:|v_i|>L^{-1/2}\}|\le L$
for every $v\in\mathbb{S}^{N-1}$, we can estimate
$$
	\|v^+\|_2^2 =
	\sum_{i:|v_i|>L^{-1/2}}|v_i|^2
	\le
	\kappa^2,\qquad
	\|v^-\|_\infty \le L^{-1/2}
$$
for all $v\in D_{L,\kappa}=D_{\nu N,\kappa}$. We readily obtain
\begin{align*}
	\sup_{v\in D_{L,\kappa}}\|X_Nv\|_2 &\le
	\sup_{v\in D_{L,\kappa}}\|X_Nv^-\|_2 +
	\sup_{v\in D_{L,\kappa}}\|X_Nv^+\|_2 \\ &\le
	L^{-1/2}\sup_{\|w\|_{\infty}\le 1}\|X_Nw\|_2 +
	\kappa\|X_N\|.
\end{align*}
In particular, as
$\sup_{v\in D_{L,\kappa}}\|X_Nv\|_2 \ge (1-\varepsilon)\|X_N\|$
on $\Omega_{\nu,\kappa,\varepsilon,N}$, we can estimate
$$
	\mathbf{P}[\Omega_{\nu,\kappa,\varepsilon,N}\cap\{\|X_N\| > t\}] \le
	\mathbf{P}\bigg[
	\frac{L^{-1/2}}{1-\varepsilon-\kappa}
	\sup_{\|w\|_{\infty}\le 1}\|X_Nw\|_2 > t\bigg].
$$
Now note that as $w\mapsto \|X_Nw\|_2$ is convex, the supremum 
over $w$ is attained at one of the extreme points $\{-1,+1\}^N$ of 
the unit cube. Thus a union bound yields
$$
	\mathbf{P}[\Omega_{\nu,\kappa,\varepsilon,N}\cap\{\|X_N\| > t\}] \le
	\sum_{w\in \{-1,+1\}^N}
	\mathbf{P}[
	\|X_Nw\|_2 > (1-\varepsilon-\kappa)L^{1/2}t],
$$
and the conclusion follows from Corollary \ref{cor:eucconc}
and $\lfloor \nu N\rfloor\ge \frac{\nu N}{2}$ for $\nu \ge \frac{1}{N}$.
\end{proof}

We can now complete the proof of the localized regime.

\begin{proof}[Proof of Theorem~\ref{thm:transition}(i)]
Assume that $d\ll\log N$, and fix $\beta>0$ independent of $N$.
Let $\nu=\nu(N):=\frac{d}{\beta^2\log N}=o(1)$, and note that
$\frac{1}{N}\le\nu<1$ for $N$ sufficiently large as $d\ge 1$.
We can therefore apply Lemma \ref{lem:locest} to estimate
\begin{multline*}
	\mathbf{P}[
	\text{there exists a }(1-\varepsilon)\text{-approximate
	top eigenvector of }X_N\text{ in }D_{\nu(N) N,\kappa}]
	\\ \le 2e^{-3N} + \mathbf{P}\big[\|X_N\| \le
	\tfrac{8}{1-\varepsilon-\kappa}\beta\sqrt{\log N}\big].
\end{multline*}
The proof is therefore complete once we show that the probability on the 
right-hand side is $o(1)$ for a sufficiently small choice of $\beta$.
To this end, note that as $\|X-Y\|\le 
\sqrt{2}\,(\sum_{i\ge j}(X_{ij}-Y_{ij})^2)^{1/2}$ for any self-adjoint 
matrices $X,Y$, the random variable $\|X_N\|$ may be viewed as a 
$\sqrt{2}$-Lipschitz function of 
the underlying i.i.d.\ standard Gaussian variables $(g_{xy})_{x\ge y}$.
Thus Theorem \ref{thm:gaussian-concentration} yields
\begin{equation}
\label{eq:normconc}
	\mathbf{P}[|\|X_N\|- \mathbf{E}\|X_N\|| \ge t] \le 2e^{-t^2/4}
	\quad\text{for all }t>0.
\end{equation}
As $\mathbf{E}\|X_N\|\ge C\sqrt{\log N}$ by Theorem \ref{thm:norm}, it 
follows that
$$
	\mathbf{P}\big[\|X_N\| \le \tfrac{1}{2}C\sqrt{\log N}\big] \le 
	2 N^{-C^2/16}=o(1),
$$
concluding the proof.
\end{proof}

\section{Delocalization of uniform random vectors in a subspace}
\label{sec:subspace}

We now turn to the delocalized regime of Theorem \ref{thm:transition}(ii), 
whose proof will occupy the remainder of this paper. In the regime $d\gg 
\log N$, Theorem \ref{thm:norm} ensures that the largest eigenvalue of 
$X_N$ sticks to the bulk of the spectrum. Consequently,
any superposition of the top $o(N)$ exact eigenvectors of $X_N$ will yield 
a $(1-o(1))$-approximate top eigenvector. The basic idea behind the proof 
is to show that even though each exact eigenvector may itself be 
localized, we can always find a superposition of $o(N)$ exact eigenvectors 
that is delocalized.

In order for such a strategy to succeed, the exact eigenvectors must 
exhibit at least the following two qualitative features:
\begin{enumerate}[1.]
\itemsep\abovedisplayskip
\item Each exact eigenvector must be at least somewhat delocalized: if 
each exact eigenvector were concentrated on $O(1)$ coordinates, then any 
superposition of $o(N)$ such eigenvectors would always be localized.
\item While individual exact eigenvectors may be localized, the top $o(N)$
exact eigenvectors cannot be simultaneously localized in the same subset 
of coordinates. In other words, the locations where different exact 
eigenvectors are localized must be spread out across all the coordinates.
\end{enumerate}
Property 1.\ was previously established in \cite[Theorem 2.9]{bandsparse},
which states that the exact eigenvectors near the edge of the spectrum 
must be spread over at least $\sim \frac{d}{\log N}$ coordinates (this 
result will not be used in our proofs). However, this does not suffice to
ensure Property 2., which requires us to understand delocalization 
of the entire space spanned by the top $o(N)$ eigenvectors.

In this section, we begin our analysis by explaining what property of a 
linear subspace $E\subseteq\mathbb{R}^N$ is needed to ensure that $E$ 
contains a delocalized unit vector. The main result of this section is the 
following.

\begin{prop}[Delocalized subspace]\label{prop:delocspace}
Let $E\subseteq\mathbb{R}^N$ be a linear subspace of dimension $m$, and 
denote by $P_E$ the orthogonal projection onto $E$. Suppose that
$$
	\max_{x\in[N]} (P_E)_{xx} \le
	\frac{Cm}{N}
$$
for some constant $C>0$. Then there exists a constant $c>0$ that depends 
only on $C$ such that $E\cap D_{\nu N,\kappa}\ne\varnothing$ for every
$0<\kappa<1$ and $0<\nu<\frac{c\kappa^2}{\log\frac{e}{\kappa}}$.
\end{prop}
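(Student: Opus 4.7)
The plan is to choose $V$ uniformly at random on $\mathbb{S}^{N-1}\cap E$ and show that $\mathbf{P}[V \in D_{\nu N,\kappa}]>0$; this suffices, since it yields a point in the intersection. The overall strategy will closely parallel the proof of Proposition~\ref{prop:unitsphere}: I will bound $\mathbf{E}\|V\|_{(\nu N)}^2$ from above and then apply Markov's inequality. The key modification is that the i.i.d.\ Gaussian order-statistic argument of Lemma~\ref{lem:sphorder} must be replaced by a marginal tail bound derived from the hypothesis on $(P_E)_{xx}$.

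The first step is to write $V = UW$, where $U\in\mathbb{R}^{N\times m}$ has orthonormal columns spanning $E$ (so that $UU^T=P_E$) and $W$ is uniform on $\mathbb{S}^{m-1}$. Denoting by $U_x$ the $x$-th row of $U$, one has $V_x=\langle U_x,W\rangle$ and $\|U_x\|_2^2=(P_E)_{xx}$. By rotational invariance of $W$, the ratio $V_x/\|U_x\|_2$ is distributed as a single coordinate of a uniform random vector on $\mathbb{S}^{m-1}$, so standard sphere concentration gives
\[
\mathbf{P}[V_x^2 > t]\le 2\exp\!\bigg(\!-\frac{mt}{2(P_E)_{xx}}\bigg)\le 2\exp\!\bigg(\!-\frac{Nt}{2C}\bigg).
\]
The crucial point is that the factor $m$ coming from sphere concentration is exactly absorbed by the hypothesis $(P_E)_{xx}\le Cm/N$, leaving a tail rate $N/(2C)$ which is \emph{independent of $m$}.

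Setting $N_t:=|\{x:V_x^2>t\}|$, the layer-cake identity reads
\[
\|V\|_{(\nu N)}^2=\int_0^\infty \min(\lfloor\nu N\rfloor, N_t)\,dt.
\]
Taking expectations using Jensen and $\mathbf{E}N_t\le 2Ne^{-Nt/(2C)}$, then splitting the integral at a threshold $t_0 \asymp (1/N)\log(1/\nu)$, a routine computation will yield $\mathbf{E}\|V\|_{(\nu N)}^2\le C_0\,\nu\log(e/\nu)$ for a constant $C_0$ depending only on $C$. Markov then produces $\mathbf{P}[V\notin D_{\nu N,\kappa}]\le C_0\,\nu\log(e/\nu)/\kappa^2$, and for $\nu\le c\kappa^2/\log(e/\kappa)$ an elementary check bounds $\log(e/\nu)$ by an absolute multiple of $\log(e/\kappa)$ (provided $c\le 1$), so this probability is at most a constant multiple of $c$ and is less than $1$ once $c$ is chosen small enough in terms of $C$.

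The main technical hurdle I anticipate is securing the marginal tail bound with rate independent of $m$; the rest is routine. The more direct approach of union-bounding the Hanson--Wright tail of the quadratic form $\sum_{x\in A}V_x^2$ over all $\binom{N}{\nu N}$ subsets $A$ only succeeds when $m\gtrsim N$, which would be useless in the intended application where $E$ is an eigenspace of dimension $m=o(N)$. The layer-cake route via coordinate-wise tails avoids the $\binom{N}{\nu N}$ union bound entirely and gives the advertised scaling $\nu\lesssim\kappa^2/\log(e/\kappa)$ uniformly in $m$.
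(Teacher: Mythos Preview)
Your argument is correct and reaches the same conclusion, but the route differs from the paper's. Both proofs draw $V$ uniformly from $E\cap\mathbb{S}^{N-1}$ and exploit that each coordinate $V_x$ is subgaussian with variance proxy $(P_E)_{xx}/m\le C/N$. From there the paper works with $\ell^q$ moments: using the Gaussian representation $Z\sim N(0,P_E)$, it computes $\mathbf{E}\|V\|_q^q\le 2(Cq)^{q/2}N^{1-q/2}$, picks a witness $v$ with small $\|v\|_q$, and then converts this to membership in $D_{\nu N,\kappa}$ via the H\"older inequality of Lemma~\ref{lem:lq} with the optimized choice $q=2\log(e/\nu)$. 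Your approach instead targets $\|V\|_{(\nu N)}^2$ directly: the layer-cake identity together with the marginal tail bound and Jensen for $\min$ gives $\mathbf{E}\|V\|_{(\nu N)}^2\lesssim \nu\log(e/\nu)$, and Markov finishes. The paper's $\ell^q$ calculation is a bit shorter and the optimization over $q$ packages the $\nu\log(e/\nu)$ scaling automatically; your layer-cake computation is more hands-on but arguably more transparent about where the logarithm enters, and it avoids the intermediate Lemma~\ref{lem:lq}. One minor point: your sphere-concentration bound $\mathbf{P}[W_1^2>t]\le 2e^{-mt/2}$ should be stated with universal constants to cover very small $m$, or handled separately there (for $m=1,2$ the hypothesis $(P_E)_{xx}\le Cm/N$ already gives $\|V\|_\infty^2\le Cm/N$ deterministically, so the conclusion is immediate); this is cosmetic and does not affect the argument.
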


Proposition \ref{prop:delocspace} shows that the relevant delocalization 
property of a linear subspace is control of the diagonal entries of its 
projection matrix. The remainder of the proof of Theorem 
\ref{thm:transition} will then aim to show that this property holds when 
$E$ is taken to be the linear span of the top $o(N)$ exact eigenvectors of 
$X_N$.

Let us first turn to the proof of Proposition \ref{prop:delocspace}. 
Rather than establish delocalization in the sense of Definition 
\ref{def:deloc} directly, it will be more convenient to establish 
$\ell^q$-bounds. A simple lemma shows that the former is implied by
the latter.

\begin{lem}
\label{lem:lq}
For all $v\in\mathbb{S}^{N-1}$, $L\le N$, and $q\ge 2$, we have
$v\in D_{L,L^{1/2-1/q}\|v\|_q}$.
\end{lem}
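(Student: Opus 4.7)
The plan is to unravel the definition of $D_{L,\kappa}$ and reduce the claim to a single inequality of Hardy--Littlewood/H\"older type on the decreasing rearrangement of $|v|$.

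First, I would observe that by definition $v\in D_{L,\kappa}$ iff $\|v\|_{(L)}\le\kappa$, and by the identity
$$
    \sup_{A\subseteq[N]:|A|\le L}\sum_{j\in A}v_j^2
    = \sum_{k=1}^{\lfloor L\rfloor} v_{(k)}^2
$$
noted in section~\ref{sec:unitsphere}, the supremum is achieved by the indices of the $\lfloor L\rfloor$ largest $|v_j|$. So the claim reduces to showing
$$
    \sum_{k=1}^{\lfloor L\rfloor} v_{(k)}^2 \le L^{1-2/q}\,\|v\|_q^2.
$$

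Second, I would apply H\"older's inequality with conjugate exponents $q/2$ and $q/(q-2)$ (for $q>2$) to $\sum_{k=1}^{\lfloor L\rfloor} v_{(k)}^2\cdot 1$, giving
$$
    \sum_{k=1}^{\lfloor L\rfloor} v_{(k)}^2
    \le \Bigl(\sum_{k=1}^{\lfloor L\rfloor} v_{(k)}^q\Bigr)^{2/q}\lfloor L\rfloor^{1-2/q}
    \le \|v\|_q^2\, L^{1-2/q},
$$
since $\sum_{k=1}^{\lfloor L\rfloor} v_{(k)}^q\le \|v\|_q^q$ and $\lfloor L\rfloor\le L$. The boundary case $q=2$ requires no argument, since then $L^{1/2-1/q}\|v\|_q=\|v\|_2=1$ and every unit vector trivially lies in $D_{L,1}$.

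There is no real obstacle here; the content of the lemma is exactly one application of H\"older to the decreasing rearrangement, and the only thing to check is that the floor function in $\lfloor L\rfloor$ does not spoil the estimate, which it does not since we only use $\lfloor L\rfloor\le L$.
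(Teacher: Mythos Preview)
Your proof is correct and follows essentially the same approach as the paper: one application of H\"older's inequality with exponents $q/2$ and $q/(q-2)$ to bound $\sum_{j\in A}|v_j|^2$ by $|A|^{1-2/q}\|v\|_q^2$. The only cosmetic difference is that the paper applies H\"older directly to an arbitrary set $A$ with $|A|\le L$, whereas you first pass to the decreasing rearrangement; the content is identical.
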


\begin{proof}
It suffices to note that for any $A\subseteq[N]$ with $|A|\le L$, we 
have
$$
	\sum_{j\in A}|v_j|^2 \le
	L^{1-2/q}
	\Bigg(\sum_{j\in A}|v_j|^q\bigg)^{2/q}
	\le L^{1-2/q} \|v\|_q^2
$$
by H\"older's inequality.
\end{proof}

To prove Proposition \ref{prop:delocspace}, we will bound the 
$\ell^q$-norm of a uniformly chosen random vector in 
$E\cap\mathbb{S}^{N-1}$; the conclusion then follows from Lemma 
\ref{lem:lq} by optimizing over $q$. Proposition \ref{prop:delocspace} may 
therefore be viewed as a complement to the delocalization established in 
Appendix~\ref{sec:unitsphere} for a uniformly chosen random vector in the 
entire sphere $\mathbb{S}^{N-1}$: here we show that a uniformly chosen 
unit vector in a subspace $E$ is still delocalized when $E$ satisfies the 
requisite assumption. (We do not develop high probablity results as in 
Appendix~\ref{sec:unitsphere}, as these are not needed in the sequel.)

\begin{proof}[Proof of Proposition \ref{prop:delocspace}]
To estimate the $\ell^q$-norm of a uniformly chosen random vector in 
$E\cap\Sb^{N-1}$, let $Z\sim N(0,P_E)$ be a Gaussian random vector in $\mathbb{R}^N$ with 
zero mean and covariance matrix $P_E$, i.e., a standard Gaussian vector in 
$E$. Then $V:=\frac{Z}{\|Z\|_2}$ is 
uniformly distributed on $E\cap\mathbb{S}^{N-1}$ and $\|Z\|_2$ and $V$ 
are independent (as the law of $Z$ is rotationally invariant in $E$).
Therefore
$$
	\mathbf{E}\|Z\|_2^q \cdot \mathbf{E}\|V\|_q^q =
	\mathbf{E}\|Z\|_q^q = 
	\sum_{x=1}^N \mathbf{E} |Z_x|^q.
$$
Now note that $Z_x\sim N(0,(P_E)_{xx})$ for every $x$, so
that $\mathbf{E}|Z_x|^q \le 2 q^{q/2} (P_E)_{xx}^{q/2}$ for all $q\ge 2$
\cite[Theorem 2.1]{conc-ineq}. On the other hand, we have by Jensen's 
inequality 
$$
	(\mathbf{E}\|Z\|_2^q)^{2/q} \ge \mathbf{E}\|Z\|_2^2
	= \mathrm{Tr}(P_E) = m.
$$
Combining the above estimates with the assumption on $P_E$ yields
$$
	\mathbf{E}\|V\|_q^q
	\le
	2 (Cq)^{q/2} N^{1-q/2} \quad\text{for all }q\ge 2.
$$
In particular, there exists $v\in 
E\cap\mathbb{S}^{N-1}$ so that $\|v\|_q^q\le 2 (Cq)^{q/2} N^{1-q/2}$.

Now fix $0<\nu<1$ and let $q=2\log\frac{e}{\nu}$. Then there exists
$v\in E\cap\mathbb{S}^{N-1}$ so that
$$
	(\nu N)^{1/2-1/q}
	\|v\|_q \le
	2^{1/q} (Cq)^{1/2} \nu^{1/2-1/q}
	\le
	(4eC)^{1/2}
	\sqrt{\nu \log\frac{e}{\nu}}.
$$
Applying Lemma \ref{lem:lq} shows that whenever $0<\nu<1$ satisfies
$4eC \nu \log\frac{e}{\nu}\le \kappa^2$, there exists a vector
$v\in E\cap D_{\nu N,\kappa}$. The conclusion follows readily.
\end{proof}

\section{Proof of delocalization for \texorpdfstring{$d\gg\log N$}{d>>log N}}
\label{sec:deloc}

To complete the proof of Theorem \ref{thm:transition}(ii), it remains to 
show that the condition of Proposition \ref{prop:delocspace} holds for the 
space $E$ spanned by the top $o(N)$ eigenvectors of $X_N$. To this end, we 
first approximate the projection matrix $P_E$ in terms of the resolvent of 
$X_N$. We can then apply resolvent estimates for nonhomogeneous random 
matrices to deduce the requisite delocalization property.

Let us begin by formalizing the projection matrix approximation.

\begin{lem}[Projection matrix approximation]
\label{lem:projapprox}
Let $X$ be a self-adjoint matrix, and let $E_{[a,b]}$ be the space spanned 
by the eigenvectors of $X$ with eigenvalues in $[a,b]$. Then for any
$a<b$ and $\gamma\ge\delta>0$, we can estimate
$$
	P_{E_{[a,b]}} \le
	\bigg(1+\frac{2\delta}{\gamma}\bigg)
	\frac{1}{\pi}\Im \int_{a-\gamma}^{b+\gamma}
	(X-\lambda-i\delta)^{-1}\,d\lambda
$$
and
$$
	\frac{1}{\pi}\Im \int_{a+\gamma}^{b-\gamma}
	(X-\lambda-i\delta)^{-1}\,d\lambda
	\le
	P_{E_{[a,b]}}
	+
	\frac{\delta}{\pi\gamma}\mathbbm{1}
$$
in the positive semidefinite order \emph{(}here $\Im X := 
\frac{X-X^*}{2i}$\emph{)}.
\end{lem}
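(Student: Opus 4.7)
The natural approach is to diagonalize $X$ and reduce everything to a scalar comparison of a mollified indicator against a true indicator. Write $X = \sum_k \mu_k v_k v_k^*$ in its spectral decomposition, so that
\begin{equation*}
\tfrac{1}{\pi}\Im\,(X-\lambda-i\delta)^{-1}
= \sum_k \tfrac{1}{\pi}\,\frac{\delta}{(\mu_k-\lambda)^2+\delta^2}\,v_kv_k^*
\end{equation*}
is a Poisson/Cauchy kernel weighted sum of the spectral projectors. Integrating in $\lambda$ over any interval $[\alpha,\beta]$ therefore yields $\sum_k \Phi_{\alpha,\beta}(\mu_k)\,v_kv_k^*$, where
\begin{equation*}
\Phi_{\alpha,\beta}(\mu) := \tfrac{1}{\pi}\bigl[\arctan((\beta-\mu)/\delta) - \arctan((\alpha-\mu)/\delta)\bigr] \in [0,1]
\end{equation*}
is a smooth mollification of $\mathbf{1}_{[\alpha,\beta]}$. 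Since $P_{E_{[a,b]}} = \sum_k \mathbf{1}_{[a,b]}(\mu_k)\,v_kv_k^*$, both operator inequalities reduce to pointwise scalar inequalities relating $\Phi$ to $\mathbf{1}_{[a,b]}$ at every $\mu \in \R$.

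For the upper bound, I take $(\alpha,\beta)=(a-\gamma,b+\gamma)$ and must show $\mathbf{1}_{[a,b]}(\mu) \le (1+2\delta/\gamma)\,\Phi_{a-\gamma,b+\gamma}(\mu)$ for every $\mu$. The inequality is trivial off $[a,b]$, so the only work is to lower bound $\Phi$ on $[a,b]$. Here I would use the identity $\arctan(x)+\arctan(1/x)=\pi/2$ for $x>0$ to rewrite
\begin{equation*}
1-\Phi_{a-\gamma,b+\gamma}(\mu) = \tfrac{1}{\pi}\arctan\bigl(\delta/(b+\gamma-\mu)\bigr) + \tfrac{1}{\pi}\arctan\bigl(\delta/(\mu-a+\gamma)\bigr),
\end{equation*}
and observe that each denominator is $\ge \gamma$ on $[a,b]$, giving $\Phi \ge 1 - \tfrac{2}{\pi}\arctan(\delta/\gamma) \ge 1 - \tfrac{2\delta}{\pi\gamma}$. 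A short elementary check (using $\gamma\ge\delta$, so $s:=2\delta/\gamma\le 2 < \pi-1$) shows $1-s/\pi \ge 1/(1+s)$, yielding the required bound.

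For the lower bound, I take $(\alpha,\beta)=(a+\gamma,b-\gamma)$ and need $\Phi_{a+\gamma,b-\gamma}(\mu) \le \mathbf{1}_{[a,b]}(\mu) + \delta/(\pi\gamma)$. On $[a,b]$ the bound $\Phi \le 1$ suffices. Off $[a,b]$, say $\mu>b$, both arguments of the arctangents are negative and at least $\gamma$ in absolute value; using $\arctan(x)+\arctan(1/x)=\pi/2$ again (with $x>0$) gives
\begin{equation*}
\Phi_{a+\gamma,b-\gamma}(\mu) \le \tfrac{1}{\pi}\bigl[\tfrac{\pi}{2} - \arctan(\gamma/\delta)\bigr] = \tfrac{1}{\pi}\arctan(\delta/\gamma) \le \tfrac{\delta}{\pi\gamma},
\end{equation*}
and symmetrically for $\mu<a$. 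The main obstacle, such as it is, is simply arranging these arctangent estimates so the sharp constants $(1+2\delta/\gamma)$ and $\delta/(\pi\gamma)$ come out cleanly; everything else is bookkeeping from the spectral decomposition.
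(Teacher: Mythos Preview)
Your proposal is correct and follows essentially the same approach as the paper: both reduce via spectral calculus to the scalar inequalities relating the arctangent-smoothed indicator $\Phi_{\alpha,\beta}$ to $\mathbf{1}_{[a,b]}$, and both use the same arctangent estimate (your identity $\arctan x+\arctan(1/x)=\pi/2$ combined with $\arctan x\le x$ is equivalent to the paper's convexity-based bound $\arctan t\ge \pi/2-1/t$) together with the algebraic check $1-s/\pi\ge 1/(1+s)$ for $s\le 2$. The only cosmetic difference is that you diagonalize explicitly while the paper invokes functional calculus; one small omission is that your lower-bound argument implicitly assumes $b-a\ge 2\gamma$ so that $[a+\gamma,b-\gamma]$ is nonempty, but the remaining case is trivial.
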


\begin{proof}
We begin by estimating
\begin{multline*}
	\int_{a-\gamma}^{b+\gamma}
	\Im 
	\frac{1}{x-\lambda-i\delta}\,d\lambda = 
	\int_{a-\gamma}^{b+\gamma}
	\frac{\delta}{(x-\lambda)^2+\delta^2}\,d\lambda
	\\ =
	\tan^{-1}\bigg(\frac{b+\gamma-x}{\delta}\bigg) -
	\tan^{-1}\bigg(\frac{a-\gamma-x}{\delta}\bigg) 
	\ge
	2\tan^{-1}\bigg(\frac{\gamma}{\delta}\bigg)
        \,1_{[a,b]}(x)
\end{multline*}
for all $x\in\mathbb{R}$,
where we used that $b+\gamma-x \ge \gamma$ and $x+\gamma-a\ge \gamma$ for 
$a\le x\le b$.
Now note that as
$g(s):=\tan^{-1}(\frac{1}{s})$ is convex for $s\ge 0$ and satisfies
$g(0^+)=\frac{\pi}{2}$, $g'(0^+)=-1$, we can estimate
$\tan^{-1}(t)\ge \frac{\pi}{2}-\frac{1}{t}$ for all $t\ge 0$. Thus
$$
	1_{[a,b]}(x) \le
	\bigg(1+\frac{2\delta}{\gamma}\bigg)
	\frac{1}{\pi}\Im
	\int_{a-\gamma}^{b+\gamma}
	\frac{1}{x-\lambda-i\delta}\,d\lambda 
$$
for all $x\in\mathbb{R}$,
where we used $1-\frac{2}{\pi}u \ge \frac{1}{1+2u}$ for $|u|\le 1$. 

In the opposite direction, an analogous computation yields
\begin{multline*}
	\int_{a+\gamma}^{b-\gamma}
	\Im 
	\frac{1}{x-\lambda-i\delta}\,d\lambda
	=
	\tan^{-1}\bigg(\frac{b-\gamma-x}{\delta}\bigg) -
	\tan^{-1}\bigg(\frac{a+\gamma-x}{\delta}\bigg)
	\\
	\le 
	\pi 1_{[a,b]}(x) + 
	\bigg(
	\frac{\pi}{2}-
	\tan^{-1}\bigg(\frac{\gamma}{\delta}\bigg)\bigg)1_{[a,b]^c}(x)
	\le
	\pi 1_{[a,b]}(x) + \frac{\delta}{\gamma}
\end{multline*}
for all $x\in\mathbb{R}$,
where we used 
$\tan^{-1}(t)\le\frac{\pi}{2}$ and
$\tan^{-1}(t)\ge \frac{\pi}{2}-\frac{1}{t}$, respectively.

Applying these inequalities to $X$ yields the conclusion 
by functional calculus.
\end{proof}

The reason Lemma \ref{lem:projapprox} is useful for our purposes is that 
we can compute the resolvent of $X_N$ by means of a mesoscopic semicircle 
law. The proof of the following result is an elementary application of the 
``intrinsic freeness'' theory of \cite{free}. (In the special case of 
interest here such a bound could alternatively be obtained, albeit with 
considerably more effort, by adapting the methods of  \cite[\S3]{erdosrenyi}.)

\begin{lem}[Mesoscopic semicircle law]
\label{lem:free}
Denote by
$$
	m_{\rm sc}(z) := -\frac{z}{2}+\frac{\sqrt{z^2-4}}{2}
$$
the Stieltjes transform of the standard semicircle law. Then
$$
	\| \mathbf{E} [(d^{-1/2}X_N-z)^{-1}] -
	m_{\rm sc}(z)\mathbbm{1}_N\| \le
	\frac{2}{d\,(\Im z)^5}
$$
for every $z\in\mathbb{C}$ with $\Im z>0$.
\end{lem}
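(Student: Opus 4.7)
The plan is to view $Y := d^{-1/2}X_N$ as a Gaussian series
$\sum_{a\le b,\,a\sim b} d^{-1/2} g_{ab}A_{ab}$ in self-adjoint matrices, where $A_{ab}=E_{ab}+E_{ba}$ for $a<b$ and $A_{aa}=E_{aa}$, and then invoke the quantitative intrinsic freeness estimate of \cite{free} to compare $\mathbf{E}[(Y-z)^{-1}]$ to the solution of the associated matrix Dyson equation. The small parameter driving the error bound in that framework is $\max_{ab}\|d^{-1/2}A_{ab}\|^2 \le 1/d$, which is exactly what produces the factor $1/d$ in the target inequality.

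The only place the combinatorics of the graph $G$ enters is through the covariance operator
$$
	\mathcal{S}(M) := \mathbf{E}[YMY] = \frac{1}{d}\sum_{a\le b,\,a\sim b} A_{ab}\,M\,A_{ab}.
$$
A direct computation using $A_{ab}^2 = E_{aa}+E_{bb}$ for $a<b$ and $A_{aa}^2 = E_{aa}$ shows that the coefficient of $E_{vv}$ in $\sum_{a\le b,\,a\sim b}A_{ab}^2$ equals the number of nonzero entries in row $v$ of $X_N$, which is $d$ by $d$-regularity. Thus $\mathcal{S}(\mathbbm{1}_N)=\mathbbm{1}_N$, and by linearity $\mathcal{S}(g\mathbbm{1}_N)=g\mathbbm{1}_N$ for every scalar $g\in\C$.

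Consequently the ansatz $G_{\rm free}(z) = g(z)\mathbbm{1}_N$ reduces the matrix Dyson equation $G_{\rm free}(z) = -(z\mathbbm{1}_N + \mathcal{S}(G_{\rm free}(z)))^{-1}$ to the scalar semicircle equation $g^2 + zg + 1 = 0$, whose unique solution with $\Im g > 0$ is $g(z) = m_{\rm sc}(z)$. The quantitative norm bound of \cite{free} then yields
$$
	\|\mathbf{E}[(Y-z)^{-1}] - G_{\rm free}(z)\| \le \frac{C\,\max_{ab}\|d^{-1/2}A_{ab}\|^2}{(\Im z)^5} \le \frac{C}{d\,(\Im z)^5}
$$
for a universal constant $C$; tracking the explicit numerical prefactor in \cite{free} gives the stated bound with constant $2$. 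The essentially nontrivial input is the $d$-regular degree count that yields $\mathcal{S}(\mathbbm{1}_N)=\mathbbm{1}_N$; once this identity is in hand, the scalar ansatz and the black-box intrinsic freeness bound do the rest, which is why the proof deserves the label ``elementary application.''
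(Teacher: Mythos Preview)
Your proposal is correct and follows essentially the same route as the paper: invoke the intrinsic freeness bound of \cite{free} (the paper cites Theorem~2.8 and Lemma~3.1 there for the constant $\tilde v(d^{-1/2}X_N)^4\le 2/d$), and verify via the $d$-regularity identity $\mathcal{S}(\mathbbm{1}_N)=\mathbbm{1}_N$ that $m_{\rm sc}(z)\mathbbm{1}_N$ solves the matrix Dyson equation. The one small omission is that your ``ansatz'' argument only shows $m_{\rm sc}(z)\mathbbm{1}_N$ is \emph{a} solution; to conclude it equals the free resolvent you need uniqueness of the matrix Dyson equation in the upper half-plane, which the paper cites from \cite{hfs}.
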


\begin{proof}
The theory of \cite{free} enables us to compare the spectral statistics of 
a random matrix $X_N$ with those of a certain deterministic operator
$X_{N,\textrm{free}}$ that arises from free probability theory.\footnote{%
More precisely, $X_{N,\mathrm{free}}$ may be defined as a matrix whose 
entries are $(X_{N,\mathrm{free}})_{xy}= 1_{x\sim y}s_{xy}$, where
$s_{xy}$ are freely independent semicircular variables modulo symmetry $s_{yx}=s_{xy}$.} 
In particular, \cite[Theorem 2.8 and Lemma 3.1]{free} yield
$$
	\| \mathbf{E} [(d^{-1/2}X_N-z)^{-1}] -
	(\mathrm{id}\otimes\tau)[(d^{-1/2}X_{N,\textrm{free}}-z)^{-1}]\|
	\le
	\frac{2}{d}
	\frac{1}{(\Im z)^5}.
$$
It remains to compute
$G(z) := (\mathrm{id}\otimes\tau)[(d^{-1/2}X_{N,\textrm{free}}-z)^{-1}]$.

To this end, note that by \cite[eq.\ (1.5)]{free-dyson},
$G(z)$ satisfies the matrix Dyson equation
$$
	\frac{1}{d}
	\sum_{\{i,j\}:i\sim j}
	E_{ij} G(z) E_{ij} + G(z)^{-1} + z \mathbbm{1}_N  = 0,
$$
where $E_{ij}:=e_ie_j^*+1_{j\ne i}e_je_i^*$. Moreover, \cite[Theorem 
2.1]{hfs} states that for any $z\in\mathbb{C}$ with $\mathrm{Im}\,z>0$, 
the matrix Dyson equation has a unique solution with positive imaginary 
part. As $m_{\rm sc}(z)$ satisfies
the equation $m_{\rm sc}(z)+ m_{\rm sc}(z)^{-1} + z=0$ and
$\Im m_{\rm sc}(z)>0$ whenever $\Im z>0$, it is readily verified that
$G(z)=m_{\rm sc}(z)\mathbbm{1}_N$ is the unique solution of the equation 
in the present setting.
\end{proof}

At this point we can clearly see the origin of the delocalization 
phenomenon: Lemma \ref{lem:free} shows that the resolvent behaves to 
leading order as a multiple of the identity matrix; by Lemma 
\ref{lem:projapprox}, this will ensure that all diagonal entries of the 
projection matrix of a suitable eigenspace of $X_N$ are roughly of the 
same order, which is precisely what is needed to apply Proposition 
\ref{prop:delocspace}. 

Before we proceed to implementing this program, we must establish 
concentration of the diagonal entries of the resolvent. This will be 
needed in order to upgrade the expected resolvent bound of
Lemma \ref{lem:free} to a high probability bound.

\begin{lem}[Resolvent concentration]
\label{lem:rconc}
Define $G_N(z):=(d^{-1/2}X_N-z)^{-1}$.
Then for any $a<b$ and $t,\delta>0$, we have
$$
	\mathbf{P}\bigg[
	\bigg|
	\Im \int_a^b 
	G_N(\lambda+i\delta)_{xx}d\lambda -
	\Im \int_a^b 
	\mathbf{E} G_N(\lambda+i\delta)_{xx}d\lambda
	\bigg|
	\ge
	\frac{t}{\delta^2} \frac{b-a}{\sqrt{d}}
	\bigg]
	\le 2e^{-t^2/4}.
$$
\end{lem}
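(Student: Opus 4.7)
The plan is to view the random variable
$$
f(g) := \Im \int_a^b G_N(\lambda+i\delta)_{xx}\,d\lambda
$$
as a function of the underlying i.i.d.\ standard Gaussians $g=(g_{xy})_{x\ge y, x\sim y}$ that define $X_N$, and apply Gaussian concentration (Theorem~\ref{thm:gaussian-concentration}). The entire argument then reduces to bounding the Euclidean Lipschitz constant of $f$ uniformly in $g$. Since $\Im z=\delta>0$ ensures the resolvent is smooth in $g$, it suffices to control $\|\nabla f\|_2$ pointwise.

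To compute partial derivatives, I would use the standard resolvent identity $\partial_{g_{ij}}G_N(z)=-d^{-1/2}G_N(z)E_{ij}G_N(z)$, where $E_{ij}:=e_ie_j^*+1_{j\ne i}e_je_i^*$ as in Lemma~\ref{lem:free}. Since $X_N$ is real symmetric, $G_N(z)$ is complex symmetric, so $(G_N(z)E_{ij}G_N(z))_{xx}$ simplifies to $2G_N(z)_{xi}G_N(z)_{xj}$ for $i<j$ and to $G_N(z)_{xi}^2$ for $i=j$. Taking $\Im$ and applying Cauchy--Schwarz in the $\lambda$-variable, the squared partial derivative is bounded by $\frac{b-a}{d}\int_a^b|(G_N(\lambda+i\delta)E_{ij}G_N(\lambda+i\delta))_{xx}|^2\,d\lambda$.

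The main step, and what makes the scaling work, is a Ward-type identity. Writing $G(z)=(d^{-1/2}X_N-z)^{-1}$ and using $G(z)-G(z)^*=2i\delta\,G(z)G(z)^*$ (valid as $d^{-1/2}X_N$ is self-adjoint), one obtains
$$
\sum_i |G(z)_{xi}|^2 = (G(z)G(z)^*)_{xx} = \frac{\Im G(z)_{xx}}{\delta} \le \frac{1}{\delta^2},
$$
since $\|G(z)\|\le1/\delta$. With $u_i:=|G(z)_{xi}|^2$, this yields
$$
\sum_{i,j:\,i\sim j} u_i u_j \le \max_i\bigl(\textstyle\sum_{j:\,i\sim j}u_j\bigr)\sum_i u_i \le \frac{1}{\delta^4},
$$
where the crucial point is that the Ward identity absorbs the neighborhood sum and avoids the naive factor of $d$ one would get from $\|A\|=d$. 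Summing the derivative bounds over $i\le j$ with $i\sim j$ then gives $\|\nabla f\|_2^2\lesssim (b-a)^2/(d\,\delta^4)$ uniformly in $g$, so $f$ is Lipschitz with constant of order $(b-a)/(\sqrt d\,\delta^2)$.

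Finally, Theorem~\ref{thm:gaussian-concentration} applied to this Lipschitz bound, with the tail level $s=t(b-a)/(\sqrt d\,\delta^2)$, yields the claimed subgaussian estimate. The main (and only) obstacle is the Lipschitz bound, for which the essential ingredient is the Ward-type identity $\sum_i|G(z)_{xi}|^2\le1/\delta^2$; everything else is bookkeeping.
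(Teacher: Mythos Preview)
Your argument is correct and yields exactly the Lipschitz constant $\sqrt{2}\,(b-a)/(\sqrt d\,\delta^2)$ needed for the stated bound. However, the route is genuinely different from the paper's, and considerably more elaborate than necessary.

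The paper simply uses the resolvent identity at the operator-norm level,
\[
\|(d^{-1/2}X-z)^{-1}-(d^{-1/2}Y-z)^{-1}\|\le \frac{\|X-Y\|}{\sqrt d\,(\Im z)^2},
\]
so that $X\mapsto \Im\int_a^b [(d^{-1/2}X-\lambda-i\delta)^{-1}]_{xx}\,d\lambda$ is $(b-a)d^{-1/2}\delta^{-2}$-Lipschitz in the operator norm, and then invokes the crude bound $\|Z\|\le\sqrt{2}\bigl(\sum_{i\ge j}Z_{ij}^2\bigr)^{1/2}$ to pass to the Euclidean norm on the Gaussian inputs. No entrywise differentiation, no Ward identity.

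Your approach computes $\nabla f$ explicitly and controls $\sum_{i\sim j}|G_{xi}|^2|G_{xj}|^2$ via $\sum_k|G_{xk}|^2=\delta^{-1}\Im G_{xx}\le\delta^{-2}$. This is correct, and the Ward-type identity can be valuable when one needs estimates at scales $\delta$ finer than what the Hilbert--Schmidt bound delivers. But here the two arguments coincide numerically, and your comment that the Ward identity is needed to ``avoid the naive factor of $d$'' overstates its role: the paper's operator-norm argument never sees that factor in the first place. In short, your proof works, but a two-line argument suffices.
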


\begin{proof}
Using the resolvent identity, we have
$$
	\|(X-z)^{-1}-(Y-z)^{-1}\| = \|(X-z)^{-1}(Y-X)(Y-z)^{-1}\|
	\le \frac{\|Y-X\|}{(\Im z)^2}.
$$
Thus
$$
	f(X):= \Im \int_a^b 
	[(d^{-1/2}X-\lambda-i\delta)^{-1}]_{xx}\,d\lambda
$$
is
$(b-a)d^{-1/2}\delta^{-2}$-Lipschitz with respect to the operator norm.
In particular, as $\|Z\|\le (\sum_{ij} |Z_{ij}|^2)^{1/2} \le
\sqrt{2}\,(\sum_{i\ge j} |Z_{ij}|^2)^{1/2}$ for any self-adjoint matrix 
$Z$, we may view $f(X_N)$ as a
$(b-a)\sqrt{2}\,d^{-1/2}\delta^{-2}$-Lipschitz function of the i.i.d.\ 
standard Gausian variables $(g_{xy})_{x\ge y}$ that define $X_N$.
The conclusion follows by Theorem \ref{thm:gaussian-concentration}.
\end{proof}

We can now combine the above results.

\begin{cor}[Projection matrix estimate]
\label{cor:projest}
Let $0\le a<b\le 3$ and $0<\delta\le\gamma\le 1$. Denote by 
$P$ the projection matrix of the space spanned by the eigenvectors of 
$d^{-1/2}X_N$ with eigenvalues in $[a,b]$. Then we have
$$
	\mathbf{P}\bigg[
	\max_{x\in[N]}
	P_{xx}
	\ge
	\bigg(1+\frac{2\delta}{\gamma}\bigg)\frac{1}{\pi}
	\bigg\{
	\Im \int_{a-\gamma}^{b+\gamma} 
	m_{\rm sc}(\lambda+i\delta) 
	d\lambda
	+\frac{10}{\delta^5d} 
	+
	\frac{20\sqrt{\log N}}{\delta^2\sqrt{d}}
	\bigg\}
	\bigg]
	\le \frac{2}{N^3}
$$
and
$$
	\mathbf{P}\bigg[
	\min_{x\in[N]}
	P_{xx} 
	\le
	\frac{1}{\pi}
	\bigg\{
	\Im \int_{a+\gamma}^{b-\gamma} 
	m_{\rm sc}(\lambda+i\delta) 
	d\lambda
	- \frac{\delta}{\gamma}
	- \frac{6}{\delta^5 d}
	- \frac{12\sqrt{\log N}}{\delta^2\sqrt{d}}
	\bigg\}
	\bigg]
	\le \frac{2}{N^3}.
$$
\end{cor}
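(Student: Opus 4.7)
The plan is to chain the three preceding results: use Lemma \ref{lem:projapprox} to sandwich $P_{xx}$ between integrals of diagonal resolvent entries, use Lemma \ref{lem:free} to replace the \emph{expected} resolvent by $m_{\rm sc}$ times the identity, and use Lemma \ref{lem:rconc} (with a union bound over $x\in[N]$) to replace the random resolvent by its expectation.

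Concretely, I would first apply Lemma \ref{lem:projapprox} to $X=d^{-1/2}X_N$ and read off the $(x,x)$-entry of the resulting positive semidefinite inequalities, giving the deterministic sandwich
\begin{align*}
P_{xx} &\le \bigg(1+\frac{2\delta}{\gamma}\bigg)\frac{1}{\pi}\,\Im\!\int_{a-\gamma}^{b+\gamma} G_N(\lambda+i\delta)_{xx}\,d\lambda, \\
P_{xx} &\ge \frac{1}{\pi}\,\Im\!\int_{a+\gamma}^{b-\gamma} G_N(\lambda+i\delta)_{xx}\,d\lambda - \frac{\delta}{\pi\gamma}.
\end{align*}
Next, Lemma \ref{lem:free} furnishes $|\mathbf{E} G_N(\lambda+i\delta)_{xx}-m_{\rm sc}(\lambda+i\delta)|\le 2/(d\delta^5)$ uniformly in $x$ and $\lambda$ (since the bound is in operator norm, it controls each diagonal entry). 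Integrating over $[a-\gamma,b+\gamma]$, whose length is at most $b-a+2\gamma\le 5$ given $b-a\le 3$ and $\gamma\le 1$, produces a deterministic error of at most $10/(d\delta^5)$; the analogous computation over $[a+\gamma,b-\gamma]$ (length at most $3$) gives $6/(d\delta^5)$.

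Finally, I would apply Lemma \ref{lem:rconc} with $t=4\sqrt{\log N}$ together with a union bound over $x\in[N]$: the probability that \emph{some} coordinate's integral deviates from its mean by more than $4\sqrt{\log N}\,(b-a\pm 2\gamma)/(\delta^2\sqrt{d})$ is at most $2N e^{-4\log N}=2N^{-3}$. Bounding the integration lengths by $5$ and $3$ respectively turns these deviations into $20\sqrt{\log N}/(\delta^2\sqrt{d})$ and $12\sqrt{\log N}/(\delta^2\sqrt{d})$, which are exactly the concentration terms appearing in the statement.

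Assembling these three ingredients by triangle inequality and collecting the common $1/\pi$ factor reproduces both displayed bounds verbatim. The proof is essentially bookkeeping; the only non-mechanical input is the choice $t=4\sqrt{\log N}$ in the union bound, but this is forced by the desired $2/N^3$ failure probability and the Gaussian tail in Lemma \ref{lem:rconc}. I do not anticipate any serious obstacle beyond tracking constants across the three steps.
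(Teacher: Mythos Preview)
Your proposal is correct and matches the paper's proof essentially step for step: the paper likewise combines Lemma~\ref{lem:projapprox} for the deterministic sandwich, Lemma~\ref{lem:free} integrated over intervals of length at most $5$ (resp.\ $3$) for the $10/(\delta^5 d)$ and $6/(\delta^5 d)$ terms, and Lemma~\ref{lem:rconc} with $t=4\sqrt{\log N}$ plus a union bound over $x\in[N]$ for the concentration terms and the $2/N^3$ failure probability.
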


\begin{proof}
Note first that Lemmas \ref{lem:projapprox} and \ref{lem:free} yield
\begin{gather*}
	P_{xx} \le
	\bigg(1+\frac{2\delta}{\gamma}\bigg)
	\frac{1}{\pi} \Im \int_{a-\gamma}^{b+\gamma}
	G_N(\lambda+i\delta)_{xx}d\lambda,\\
	\Im \int_{a-\gamma}^{b+\gamma}
	\mathbf{E}G_N(\lambda+i\delta)_{xx}d\lambda
	\le
	\Im \int_{a-\gamma}^{b+\gamma}
	m_{\rm sc}(\lambda+i\delta)d\lambda +
	\frac{10}{\delta^5d},
\end{gather*}
where we used that $b-a+2\gamma \le 5$. Thus Lemma \ref{lem:rconc} yields
$$
	\mathbf{P}\bigg[
	P_{xx}
	\ge
	\bigg(1+\frac{2\delta}{\gamma}\bigg)\frac{1}{\pi}
	\bigg\{
	\Im \int_{a-\gamma}^{b+\gamma} 
	m_{\rm sc}(\lambda+i\delta) 
	d\lambda
	+\frac{10}{\delta^5d} 
	+
	\frac{5t}{\delta^2\sqrt{d}}
	\bigg\}
	\bigg]
	\le 2e^{-t^2/4}
$$
for all $x\in[N]$ and $t\ge 0$. The first inequality now follows by taking 
a union bound and choosing $t=4\sqrt{\log N}$. The second inequality is 
derived analogously.
\end{proof}

The final ingredient that will be needed in the proof of Theorem 
\ref{thm:transition}(ii) is an estimate on the integrals that appear in 
Corollary \ref{cor:projest}.

\begin{lem}[Semicircle law estimate]
\label{lem:mscint}
For any $0<\delta<\varepsilon<1$ and $c\ge 2+\varepsilon$
$$
	\bigg(1-\frac{2\delta}{\varepsilon}\bigg)
	\frac{\sqrt{3}}{3\pi}
	\varepsilon^{3/2}
	\le
	\frac{1}{\pi} \Im \int_{2-2\varepsilon}^c
	m_{\rm sc}(\lambda+i\delta)d\lambda
	\le 2\varepsilon^{3/2} + \frac{\delta}{\varepsilon}.
$$
\end{lem}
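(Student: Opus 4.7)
The plan is to reduce the integral to a convolution integral via the Stieltjes representation and then estimate the two bounds by splitting at the edge $x=2-2\varepsilon$. Using the identity $\frac{1}{\pi}\Im m_{\rm sc}(\lambda+i\delta) = (P_\delta\star\rho_{\rm sc})(\lambda)$ with Cauchy kernel $P_\delta(t)=\delta/[\pi(t^2+\delta^2)]$, Fubini yields
$$
\frac{1}{\pi}\Im\int_{2-2\varepsilon}^c m_{\rm sc}(\lambda+i\delta)\,d\lambda
= \int_{-2}^2 \rho_{\rm sc}(x)\,\Phi_\delta(x)\,dx,
$$
where $\Phi_\delta(x) := \frac{1}{\pi}[\arctan((c-x)/\delta) - \arctan((2-2\varepsilon-x)/\delta)]$ is the Cauchy$(x,\delta)$ probability of the interval $[2-2\varepsilon,c]$. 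All subsequent work would be performed on this representation.

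For the lower bound, I would restrict integration to $x\in[2-\varepsilon,2]$. On this interval, the assumption $c\ge 2+\varepsilon$ forces $(c-x)/\delta\ge\varepsilon/\delta$ and $(x-(2-2\varepsilon))/\delta\ge\varepsilon/\delta$, so the elementary estimate $\arctan(t)\ge\pi/2-1/t$ for $t>0$ applied to both arctan terms yields $\Phi_\delta(x)\ge 1-2\delta/(\pi\varepsilon)\ge 1-2\delta/\varepsilon$. It then remains to show $\int_{2-\varepsilon}^2\rho_{\rm sc}(x)\,dx\ge\frac{\sqrt 3}{3\pi}\varepsilon^{3/2}$, which reduces after the change of variables $u=2-x$ to the elementary inequality $\int_0^\varepsilon\sqrt{u(4-u)}\,du\ge\int_0^\varepsilon\sqrt{3u}\,du=\frac{2\sqrt 3}{3}\varepsilon^{3/2}$; the bound $\sqrt{4-u}\ge\sqrt{3}$ is immediate on $[0,\varepsilon]\subseteq[0,1]$.

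For the upper bound, I would again split at $x=2-2\varepsilon$. On $[2-2\varepsilon,2]$ the trivial bound $\Phi_\delta\le 1$ combined with $\sqrt{(2-x)(2+x)}\le 2\sqrt{2-x}$ gives $\int_{2-2\varepsilon}^2\rho_{\rm sc}(x)\,dx\le\frac{4\sqrt 2}{3\pi}\varepsilon^{3/2}<\varepsilon^{3/2}$. On $[-2,2-2\varepsilon)$ the key estimate $\Phi_\delta(x)\le\delta/[\pi(2-2\varepsilon-x)]$ follows from $\arctan((c-x)/\delta)\le\pi/2$ and $\arctan(s/\delta)\ge\pi/2-\delta/s$.

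The main obstacle is that the naive tail bound $\int\rho_{\rm sc}(x)/(2-2\varepsilon-x)\,dx$ is logarithmically divergent at $x=2-2\varepsilon$. My plan is to split the tail at $s=2-2\varepsilon-x\sim\delta$: on $s\in[0,\delta]$ use $\Phi_\delta\le 1/2$ together with $\rho_{\rm sc}(2-2\varepsilon)=O(\sqrt\varepsilon)$, giving an $O(\delta\sqrt\varepsilon)$ contribution; on $s\in[\delta,4-2\varepsilon]$ use $\Phi_\delta\le\delta/(\pi s)$ with $\sqrt{(2\varepsilon+u)(4-2\varepsilon-u)}\le 2\sqrt{2\varepsilon+u}$ and split again at $u=2\varepsilon$ to isolate the near-edge piece $\delta\sqrt\varepsilon\log(2\varepsilon/\delta)$ from a bulk contribution of order $\delta$. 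The key observation making this succeed is that $t\log(1/t)\le 1/e$ for $t\in(0,1)$, so the potentially troublesome factor $(\delta/\varepsilon)\log(\varepsilon/\delta)$ is in fact uniformly bounded; this absorbs into the coefficient of $\varepsilon^{3/2}$, while the residual $O(\delta)$ terms fit into $\delta/\varepsilon$ since $\varepsilon<1$, yielding the stated bound $2\varepsilon^{3/2}+\delta/\varepsilon$.
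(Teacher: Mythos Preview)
Your argument is correct. The lower bound is essentially identical to the paper's: both restrict to $x\in[2-\varepsilon,2]$, use $\arctan t\ge\frac{\pi}{2}-\frac{1}{t}$ to bound the Cauchy mass from below by $1-\frac{2\delta}{\varepsilon}$ (the paper's factor $(1+\frac{2\delta}{\varepsilon})^{-1}$ is just the other side of the same estimate), and then bound $\int_{2-\varepsilon}^2\rho_{\rm sc}\ge\frac{\sqrt{3}}{3\pi}\varepsilon^{3/2}$ via $\sqrt{4-u}\ge\sqrt{3}$.

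For the upper bound the paper takes a slicker route. Rather than split the tail $x<2-2\varepsilon$ and wrestle with the logarithmic near-divergence of $\int\rho_{\rm sc}(x)/(2-2\varepsilon-x)\,dx$, the paper simply reuses the second inequality of Lemma~\ref{lem:projapprox} in its scalar form: for any $a<b$ and $\gamma\ge\delta$,
\[
\frac{1}{\pi}\Im\int_{a+\gamma}^{b-\gamma} m_{\rm sc}(\lambda+i\delta)\,d\lambda \le \int_a^b\rho_{\rm sc}(x)\,dx + \frac{\delta}{\gamma}.
\]
Choosing $\gamma=\varepsilon$, $a=2-3\varepsilon$, $b=c+\varepsilon$ gives $\frac{1}{\pi}\Im\int_{2-2\varepsilon}^c m_{\rm sc}\le\int_{2-3\varepsilon}^2\rho_{\rm sc}+\frac{\delta}{\varepsilon}$, and the elementary bound $\int_{2-3\varepsilon}^2\rho_{\rm sc}\le\frac{2}{3\pi}(3\varepsilon)^{3/2}<2\varepsilon^{3/2}$ finishes the job in one line. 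The point is that shifting the $\lambda$-interval outward by $\varepsilon$ absorbs the entire Cauchy tail into an enlarged but still clean $\rho_{\rm sc}$-integral, so no splitting is needed. Your direct estimate works and gives comparable constants, but the paper's observation that the two lemmas are structurally the same computation buys a much shorter proof.
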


\begin{proof}
As $m_{\rm sc}(z)  = \int (x-z)^{-1}\rho_{\rm sc}(x)\,dx$ is the Stieltjes 
transform of the standard semicircle law (as defined in Theorem 
\ref{thm:norm}), we can apply precisely the same estimates as in
Lemma \ref{lem:projapprox} to estimate for any $a<b$
$$
	\frac{1}{\pi}
	\Im
	\int_{a+\varepsilon}^{b-\varepsilon}
	m_{\rm sc}(\lambda+i\delta)d\lambda
	-
	\frac{\delta}{\varepsilon}
	\le
	\int_a^b \rho_{\rm sc}(x)dx
	\le
	\bigg(1+\frac{2\delta}{\varepsilon}\bigg) \frac{1}{\pi}
	\Im
	\int_{a-\varepsilon}^{b+\varepsilon}
	m_{\rm sc}(\lambda+i\delta)d\lambda.
$$
Choosing 
$a=2-3\varepsilon$, $b=c+\varepsilon$ and
$a=2-\varepsilon$, $b=c-\varepsilon$, respectively, yields
$$
	\bigg(1-\frac{2\delta}{\varepsilon}\bigg) 
	\int_{2-\varepsilon}^2 \rho_{\rm sc}(x)dx
	\le
	\frac{1}{\pi}
	\Im
	\int_{2-2\varepsilon}^c
	m_{\rm sc}(\lambda+i\delta)d\lambda
	\le
	\int_{2-3\varepsilon}^2 \rho_{\rm sc}(x)dx
	+
	\frac{\delta}{\varepsilon},
$$
where we used that $\rho_{\rm sc}$ is supported on $[-2,2]$ and that
$1-u \le \frac{1}{1+u}$ for $u\ge 0$. It remains to note that we can 
estimate 
$$
	\frac{2\sqrt{1-u/4}}{3\pi}
	u^{3/2}
	\le
	\int_{2-u}^2 \rho_{\rm sc}(x)dx =
	\frac{1}{2\pi}
	\int_{2-u}^2 \sqrt{(2-x)(2+x)} dx 
	\le
	\frac{2}{3\pi}
	u^{3/2}
$$
for $u\le 4$, and the proof is readily completed.
\end{proof}

We are now ready to complete the proof of Theorem 
\ref{thm:transition}(ii).

\begin{proof}[Proof of Theorem \ref{thm:transition}(ii)]
We will assume throughout the proof that $d\gg\log N$. 
Let $E_N$ be the space spanned by the eigenvectors of $d^{-1/2}X_N$ with 
eigenvalues in the interval $[2-\varepsilon,3]$, and denote by $P_N$ its 
projection matrix. Here $\varepsilon=\varepsilon_N=o(1)$ will depend on 
$N$ in a manner that will be chosen below. As
$$
	\mathbf{P}\bigg[d^{-1/2}\|X_N\| \ge (2+s) + 
	K_s\sqrt{\frac{\log N}{d}} + t\bigg]
	\le 2e^{-dt^2/4}
$$
for any $s,t>0$ by \eqref{eq:normconc} and Theorem \ref{thm:norm},
we have 
$$
	\mathbf{P}[d^{-1/2}\|X_N\|\le 2+o(1)]=1-o(1).
$$
Thus every unit vector in $E_N$ is a $(1-o(1))$-approximate top 
eigenvector of $X_N$ with probability $1-o(1)$. It remains to show that 
$E_N$ contains delocalized vectors.

To this end, we begin by applying Corollary \ref{cor:projest} and
Lemma \ref{lem:mscint}
with $a=2-\varepsilon$, $b=3$, $\gamma=\frac{\varepsilon}{2}$, and
$\delta=\varepsilon^3$. This yields 
$$
	\mathbf{P}\bigg[
	\max_{x\in[N]}
	(P_N)_{xx}
	\ge
	C\varepsilon^{3/2} 
	+\frac{C}{\varepsilon^{15}d} 
	+\frac{C}{\varepsilon^6}
	\sqrt{\frac{\log N}{d}}
	\bigg]
	\le \frac{2}{N^3}
$$
and
$$
	\mathbf{P}\bigg[
	\min_{x\in[N]}
	(P_N)_{xx} 
	\le
	c\varepsilon^{3/2}
	- \frac{C}{\varepsilon^{15} d}
	- \frac{C}{\varepsilon^6} \sqrt{\frac{\log N}{d}}
	\bigg]
	\le \frac{2}{N^3}
$$
for sufficiently large $N$, where $c,C>0$ are universal constants.
In particular, if we choose $\varepsilon = (\frac{\log N}{d})^{1/17}$,
the last two terms inside each of the above probabilities become 
negligible, and we can conclude that
$$
	\mathbf{P}\bigg[
	\max_{x\in[N]}(P_N)_{xx} \le
	C'
	\min_{x\in[N]}(P_N)_{xx} 
	\bigg]=1-o(1)
$$
for a universal constant $C'$.

Now let $m_N=\dim E_N$ (note that this is a random variable). Then
$$
	\min_{x\in[N]}(P_N)_{xx}
	\le
	\frac{1}{N}\sum_{x\in[N]} (P_N)_{xx}
	=
	\frac{\mathrm{Tr}(P_N)}{N} =
	\frac{m_N}{N}.
$$
Thus we have shown that the assumption of Proposition 
\ref{prop:delocspace} holds with probability $1-o(1)$, completing the 
proof of the theorem.
\end{proof}

\appendix

\section{Delocalization of uniform random vectors in
\texorpdfstring{$\mathbb{S}^{N-1}$}{the sphere}}
\label{sec:unitsphere}

The aim of this section is to elucidate the delocalization properties of 
uniform random vectors in $\mathbb{S}^{N-1}$. The main result of this 
section is the following.

\begin{prop}[Unit sphere delocalization]
\label{prop:unitsphere}
Let $V_N$ be a random vector that is uniformly distributed on 
$\mathbb{S}^{N-1}$, and fix $0<\kappa<1$ that is independent of $N$. Then 
there exist universal constants $c_1,c_2>0$ such that the following hold 
as $N\to\infty$.
\begin{enumerate}[a.]
\item If $\nu \le \frac{c_1\kappa^2}{\log\frac{e}{\kappa}}$, then
$\mathbf{P}[V_N\in D_{\nu N,\kappa}]=1-o(1)$.
\item If $\nu \ge \frac{c_2\kappa^2}{\log\frac{e}{\kappa}}$, then
$\mathbf{P}[V_N\in D_{\nu N,\kappa}]=o(1)$.
\end{enumerate}
\end{prop}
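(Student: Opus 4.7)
The plan is to realize $V_N = G/\|G\|_2$ for a standard Gaussian vector $G\in\R^N$. Since $\|G\|_2^2$ is chi-squared with $N$ degrees of freedom and so concentrates tightly at $N$, the event $V_N\in D_{\nu N,\kappa}$ is, up to negligible corrections, the event $\{M_L\le\kappa^2 N\}$, where $L=\nu N$ and
$$
M_L \;:=\; \max_{A\subseteq[N],\,|A|\le L}\sum_{j\in A} G_j^2 \;=\; \sum_{k=1}^L G_{(k)}^2,
$$
with $G_{(1)}^2\ge G_{(2)}^2\ge\cdots$ denoting the decreasing squared order statistics. The proposition thereby reduces to matching two-sided estimates on $M_L$.

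For part a., I would apply the elementary pointwise inequality $G_{(k)}^2 \le t + (G_{(k)}^2-t)_+$ (valid for any $t>0$) to obtain
$$
M_L \;\le\; Lt \;+\; \sum_{j=1}^N (G_j^2-t)_+.
$$
The second term is a sum of i.i.d.\ subexponential variables and so concentrates around $N\,\E[(G^2-t)_+]$ by Bernstein's inequality. An integration-by-parts computation against the Gaussian density combined with Mills-ratio asymptotics yields
$$
\E[(G^2-t)_+] \;=\; \frac{2\phi(\sqrt{t})}{\sqrt{t}}(1+o(1)) \;=\; \P[G^2>t]\,(1+o(1))
$$
for large $t$. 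Choosing $t=t_\nu$ with $\P[G^2>t_\nu]=\nu$, Mills ratio further gives $t_\nu=(2+o(1))\log(1/\nu)$, so with high probability $M_L \le C\nu N\log(e/\nu)$. This is at most $\kappa^2 N$ once $\nu \le c_1\kappa^2/\log(e/\kappa)$, as follows by solving $\nu\log(e/\nu)\le\kappa^2/(2C)$ for $\nu$.

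For part b., I would use the matching lower bound $M_L \ge L\cdot G_{(L)}^2$. The count $N_s := |\{j:G_j^2>s\}|$ is $\mathrm{Binomial}(N,\P[G^2>s])$, so selecting $s$ slightly below $t_\nu$ (with $\P[G^2>s]$ a fixed multiple larger than $\nu$) ensures $N_s \ge L$ by Chernoff, and hence $G_{(L)}^2 \ge s$, with high probability. This gives $M_L \ge c\,\nu N\log(e/\nu)$, which exceeds $\kappa^2 N$ once $\nu \ge c_2\kappa^2/\log(e/\kappa)$.

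There is no deep conceptual obstacle: the entire argument is a standard reduction to Gaussian order-statistic computations. The only mildly delicate step is the Mills-ratio bookkeeping needed to ensure the constants $c_1,c_2$ are genuinely universal (independent of $\kappa$) when translating the inequality $\nu\log(e/\nu)\asymp\kappa^2$ into the announced threshold $\nu\asymp\kappa^2/\log(e/\kappa)$.
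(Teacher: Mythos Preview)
Your argument is correct and reaches the same two-sided threshold $\nu\log\frac{e}{\nu}\asymp\kappa^2$ as the paper, but the route is genuinely different. The paper proceeds in two modular steps: first it computes $\mathbf{E}\|V_N\|_{(\nu N)}^2\asymp\nu\log\frac{e}{\nu}$ by citing known bounds on Gaussian order statistics (namely $\mathbf{E}Z_{(k)}^2\asymp\log\frac{eN}{k}$) and summing, and then it invokes a concentration inequality for Lipschitz functions on the sphere (cited from the literature) to show $\|V_N\|_{(\nu N)}$ is within $O(N^{-1/2})$ of its mean. Your approach instead establishes high-probability bounds on $M_L$ directly, via the truncation $M_L\le Lt+\sum_j(G_j^2-t)_+$ combined with Bernstein for the upper bound, and the binomial count $|\{j:G_j^2>s\}|$ combined with Chernoff for the lower bound; concentration of $\|G\|_2^2$ near $N$ then transfers these to $V_N$. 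What this buys you is a fully self-contained argument that avoids both external citations, at the cost of the Mills-ratio bookkeeping you flag. What the paper's route buys is brevity and a clean separation of the expectation computation from the concentration step.
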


In particular, this result shows that the approximate top eigenvector 
provided by Theorem \ref{thm:transition} in the delocalized regime is 
essentially as delocalized as a uniform random vector, up to to the value 
of the universal constant.

We begin by introducing some notation. For any 
vector $z\in\mathbb{R}^N$, we denote by $z_{(1)}\ge z_{(2)}\ge\cdots\ge 
z_{(N)}\ge 0$ the decreasing rearrangement of $z$, that is, the absolute 
values of the entries of $z$ sorted in decreasing order. We define the 
norm
$$
	\|z\|_{(L)} :=
	\sup_{A\subseteq[N]:|A|\le L}
	\Bigg(
	\sum_{j\in A}|z_j|^2\Bigg)^{1/2} =
	\Bigg(\sum_{k=1}^{\lfloor L\rfloor} z_{(k)}^2\Bigg)^{1/2}.
$$
Thus $z\in D_{L,\kappa}$ if and only if
$\|z\|_{(L)}\le\kappa$. The proof of Proposition \ref{prop:unitsphere} 
requires us to estimate $\|V_N\|_{(\nu N)}$. Let us first 
investigate its expectation.

\begin{lem}
\label{lem:sphorder}
There exist universal constants $C_1,C_2>0$ so that
$$
	C_1\nu\log\frac{e}{\nu}
	\le
	\mathbf{E}\|V_N\|_{(\nu N)}^2 \le
	C_2\nu\log\frac{e}{\nu}.
$$
\end{lem}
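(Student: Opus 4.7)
The plan is to reduce the problem to estimating expected order statistics of i.i.d.\ standard Gaussians. Let $g=(g_1,\ldots,g_N)$ have i.i.d.\ $N(0,1)$ entries. We may take $V_N = g/\|g\|_2$, so that $g = \|g\|_2\, V_N$ with $\|g\|_2$ and $V_N$ independent. Since $\|\cdot\|_{(\nu N)}$ is a norm, $\|g\|_{(\nu N)} = \|g\|_2\,\|V_N\|_{(\nu N)}$, and taking expectations gives
$$
    \mathbf{E}\|g\|_{(\nu N)}^2
    = \mathbf{E}[\|g\|_2^2]\cdot\mathbf{E}\|V_N\|_{(\nu N)}^2
    = N\cdot\mathbf{E}\|V_N\|_{(\nu N)}^2.
$$
Writing $|g|_{(1)}\ge\cdots\ge|g|_{(N)}$ for the decreasing rearrangement of $|g_1|,\ldots,|g_N|$, it thus suffices to establish
$$
    c_1 N\nu\log\tfrac{e}{\nu}
    \le \sum_{k=1}^{\lfloor\nu N\rfloor}\mathbf{E}|g|_{(k)}^2
    \le c_2 N\nu\log\tfrac{e}{\nu}.
$$

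For the upper bound, the Gaussian tail $\mathbf{P}(|g_1|\ge t)\le 2 e^{-t^2/2}$ together with a union bound over $k$-subsets yields $\mathbf{P}(|g|_{(k)}\ge t) \le \binom{N}{k}(2 e^{-t^2/2})^k$. Optimizing the threshold in $\mathbf{E}|g|_{(k)}^2 = \int_0^\infty 2t\,\mathbf{P}(|g|_{(k)}\ge t)\,dt$ gives $\mathbf{E}|g|_{(k)}^2 \le C\log(eN/k)$. A Stirling estimate then yields $\sum_{k=1}^{\lfloor\nu N\rfloor}\log(eN/k) \le \nu N\log(e^2/\nu) \le 2\nu N\log(e/\nu)$, as desired.

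For the lower bound, I would use a second-moment argument on the counting function $M_t := \#\{i:|g_i|\ge t\}\sim\mathrm{Binomial}(N,p_t)$, where $p_t := \mathbf{P}(|g_1|\ge t)$. Choose $t_k$ so that $Np_{t_k}=2k$; the Gaussian lower tail $\mathbf{P}(|g_1|\ge t)\ge c\,e^{-t^2/2}/(t+1)$ then forces $t_k^2\ge c'\log(N/k)$ for $k\le N/2$, and Chebyshev's inequality (using $\mathrm{Var}(M_{t_k})\le\mathbf{E} M_{t_k}$) gives $\mathbf{P}(M_{t_k}\ge k)\ge\tfrac12$. Hence $\mathbf{E}|g|_{(k)}^2 \ge \tfrac12 t_k^2\gtrsim\log(N/k)$ in this range, and integral comparison produces $\sum_{k=1}^{\lfloor\nu N\rfloor}\mathbf{E}|g|_{(k)}^2\gtrsim \nu N\log(e/\nu)$ when $\nu\le\tfrac12$. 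For $\nu\ge\tfrac12$, one uses instead that $\mathbf{E}|g|_{(k)}^2$ is decreasing in $k$ and $\sum_{k=1}^N\mathbf{E}|g|_{(k)}^2 = N$, so $\sum_{k=1}^{\lfloor\nu N\rfloor}\mathbf{E}|g|_{(k)}^2\ge \sum_{k=1}^{\lfloor N/2\rfloor}\mathbf{E}|g|_{(k)}^2\ge N/2\gtrsim \nu N\log(e/\nu)$ (since $\log(e/\nu)\le\log(2e)$ in this regime).

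The main technical point in both directions is tracking the lower-order factors in the Gaussian tail at the critical threshold, which is routine. The essential bookkeeping issue is that the ``$e$'' in $\log(e/\nu)$, rather than a bare $\log(1/\nu)$, arises from combining the logarithmic contribution from small order statistics with a constant-order contribution from the middle of the order statistics; neither bound alone gives the full answer, and the two regimes must be separately analyzed and then pasted together.
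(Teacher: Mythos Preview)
Your proposal is correct and follows essentially the same route as the paper: reduce to a standard Gaussian vector via the independence of $\|g\|_2$ and $g/\|g\|_2$, then use the two-sided bound $\mathbf{E}|g|_{(k)}^2 \asymp \log(eN/k)$ for $k\le N/2$ together with a Stirling-type summation. The only difference is that the paper handles $\nu\ge\tfrac12$ by a one-line deterministic observation on the sphere and then cites the order-statistic bounds as a known result (\cite[Theorem~2.5]{BT12}), whereas you derive them from scratch via union bound and a second-moment argument; your version is more self-contained but otherwise identical in structure.
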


\begin{proof}
As $\frac{1}{3}\le \|z\|_{(N/2)}^2\le \|z\|_{(\nu N)}^2\le 
1$ for all $\nu\ge\frac{1}{2}$ and $z\in \mathbb{S}^{N-1}$, it suffices to 
consider $\nu<\frac{1}{2}$.
Let $Z$ be a standard Gaussian vector in $\mathbb{R}^N$. Then well-known 
estimates on order statistics (see, e.g., \cite[Theorem 2.5]{BT12}) yield
$$
	c\log\frac{eN}{k}\le\mathbf{E}Z_{(k)}^2\le 
	C\log\frac{eN}{k}
$$
for all $1\le k\le \frac{N}{2}$, where $c,C>0$ are universal constants.
Stirling's formula yields
$$
	c'\nu N\log\frac{e}{\nu}
	\le
	\mathbf{E}\|Z\|_{(\nu N)}^2
	\le C'\nu N\log
	\frac{e}{\nu}
$$
for universal constants $c',C'>0$. It remains to 
note that $Z\stackrel{\mathrm{d}}{=}\|Z\|_2V_N$ where $V_N$ is 
independent of $Z$, so that $\mathbf{E}\|Z\|^2_{(\nu N)} =
N\,\mathbf{E}\|V_N\|^2_{(\nu N)}$.
\end{proof}

We can now complete the proof of Proposition \ref{prop:unitsphere}.

\begin{proof}[Proof of Proposition \ref{prop:unitsphere}]
It is shown in \cite[Theorem 2.10]{eigenvectors-survey} that
$$
	\mathbf{P}\big[\big|\|V_N\|_{(\nu N)} - 
		\mathbf{E}\|V_N\|_{(\nu N)}\big|
	>t\big] \le C e^{-cNt^2},
$$
where $c,C>0$ are universal constants. 
This implies that $\mathrm{Var}(\|V_N\|_{(\nu N)})=O(\frac{1}{N})$, 
so that $\mathbf{E}\|V_N\|_{(\nu N)} = 
(\mathbf{E}\|V_N\|_{(\nu N)}^2)^{1/2}+o(1)$ as $N\to\infty$. 
Using Lemma \ref{lem:sphorder} and applying the above tail estimate 
again yields
$$
	\begin{aligned}
	&\mathbf{P}[V_N\in D_{\nu N,\kappa}]=1-o(1)
	&&\text{when }\textstyle{C'\nu\log\frac{e}{\nu}\le \kappa^2},\\
	&\mathbf{P}[V_N\in D_{\nu N,\kappa}]=o(1) 
	&&\text{when }\textstyle{c'\nu\log\frac{e}{\nu}\ge\kappa^2}
	\end{aligned}
$$
for some universal constants 
$c',C'>0$. The conclusion follows readily.
\end{proof}

\subsection*{Acknowledgments}

LS was supported in part by NSF Graduate Research Fellowship under Grant 
No. DGE-2039656, and Simons Foundation grant 563916, SM.
RvH was supported in part by the NSF grants DMS-1856221 and 
DMS-2054565. We thank the referee for helpful remarks that improved
the presentation.

\subsection*{Data Availibility}

Data sharing is not applicable to this article as no datasets were 
generated or analysed during
the current study.

\subsection*{Conflict of interest}

The authors have no conflicts of interest.

\bibliographystyle{abbrv}
\bibliography{./rmat_ref.bib}

\end{document}